\theoremstyle{plain}
\newtheorem{lemma}{Lemma}[section]
\newtheorem{proposition}[lemma]{Proposition}
\newtheorem{corollary}[lemma]{Corollary}
\newtheorem*{stat}{\name}
\newcommand{\name}{testing}
\theoremstyle{definition}
\newtheorem{definition}[lemma]{Definition}
\theoremstyle{remark}
\newtheorem{notation}[lemma]{Notation}
\newcommand{\qedc}{{\qed}~{\rm Claim~{\theclaim}.}}
\newcommand{\qedsc}{{\qed}~{\rm Claim.}}
\newcommand{\case}[1]%
{\smallskip\noindent\textbf{\textit{Case}\ {#1}.}}
\numberwithin{equation}{section}
\numberwithin{figure}{section}
\newcommand{\pup}[1]{\textup{(}{#1}\textup{)}}
\newcommand{\tvi}{\vrule height 12pt depth 6pt width 0pt}
\newcommand{\ga}{\alpha}
\newcommand{\gb}{\beta}
\newcommand{\gc}{\gamma}
\newcommand{\gf}{\varphi}
\newcommand{\gy}{\psi}
\newcommand{\gk}{\kappa}
\newcommand{\gl}{\lambda}
\newcommand{\gn}{\nu}
\newcommand{\gq}{\theta}
\newcommand{\gr}{\rho}
\newcommand{\gx}{\xi}
\newcommand{\gh}{\eta}
\newcommand{\go}{\omega}
\newcommand{\les}{\leqslant}
\newcommand{\intgr}[1]{\left\lfloor{#1}\right\rfloor}
\DeclareMathOperator{\kur}{kur}
\DeclareMathOperator{\brd}{br}
\DeclareMathOperator{\wdt}{wd}
\DeclareMathOperator{\card}{card}
\DeclareMathOperator{\cf}{cf}
\DeclareMathOperator{\J}{J}
\newcommand{\Pow}{\mathfrak{P}}
\newcommand{\tr}{\vartriangleleft}
\newcommand{\cD}{\mathcal{D}}
\newcommand{\cS}{\mathcal{S}}
\newcommand{\jirr}{join-ir\-re\-duc\-i\-ble}
\newcommand{\memb}{meet-em\-bed\-ding}
\newcommand{\es}{\varnothing}
\newcommand{\into}{\hookrightarrow}
\newcommand{\mono}{\rightarrowtail}
\newcommand{\famm}[2]{\left(#1\mid#2\right)}
\newcommand{\set}[1]{\{#1\}}
\newcommand{\setm}[2]{\set{#1\mid#2}}
\newcommand{\dnw}{\mathbin{\downarrow}}
\newcommand{\ddnw}{\mathbin{\downdownarrows}}
\newcommand{\js}{join-sem\-i\-lat\-tice}
\newcommand{\rB}{\mathrm{B}}
\begin{document}

\title{An infinite combinatorial statement with a poset parameter}

\author[P.~Gillibert]{Pierre Gillibert}
\author[F.~Wehrung]{Friedrich Wehrung}
\address{LMNO, CNRS UMR 6139\\
D\'epartement de Math\'ematiques, BP 5186\\
Universit\'e de Caen, Campus 2\\
14032 Caen cedex\\
France}
\email[P. Gillibert]{Pierre.Gillibert@math.unicaen.fr, pgillibert@yahoo.fr}
\urladdr[P. Gillibert]{http://www.math.unicaen.fr/\~{}giliberp/}

\email[F. Wehrung]{wehrung@math.unicaen.fr, fwehrung@yahoo.fr}
\urladdr[F. Wehrung]{http://www.math.unicaen.fr/\~{}wehrung}

\date{\today}

\subjclass[2000]{03E05, 06A07, 05A05, 05D10, 06A05}

\keywords{Kuratowski's Free Set Theorem; Kuratowski index; free set; cardinal; poset; order-dimension; breadth; truncated cube}

\thanks{This work was partially supported by the institutional grant MSM 0021620839}

\begin{abstract}
We introduce an extension, indexed by a partially ordered set~$P$ and cardinal numbers~$\gk$, $\gl$, denoted by $(\gk,{<}\gl)\leadsto P$, of the classical relation $(\gk,n,\gl)\rightarrow\nobreak\rho$ in infinite combinatorics. By definition, $(\gk,n,\gl)\rightarrow\gr$ holds if every map $F\colon[\gk]^n\to[\gk]^{<\gl}$ has a $\gr$-element free set. For example, Kuratowski's Free Set Theorem states that $(\gk,n,\gl)\rightarrow n+1$ holds if{f} $\gk\geq\gl^{+n}$, where $\gl^{+n}$ denotes the $n$-th cardinal successor of an infinite cardinal~$\gl$. By using the $(\gk,{<}\gl)\leadsto P$ framework, we present a self-contained proof of the first author's result that $(\gl^{+n},n,\gl)\rightarrow n+2$, for each infinite cardinal~$\gl$ and each positive integer~$n$, which solves a problem stated in the 1985 monograph of Erd\H{o}s, Hajnal, M\'at\'e, and Rado. Furthermore, by using an order-dimension estimate established in 1971 by Hajnal and Spencer, we prove the relation $(\gl^{+(n-1)},r,\gl)\rightarrow2^{\intgr{\frac{1}{2}(1-2^{-r})^{-n/r}}}$, for every infinite cardinal~$\gl$ and all positive integers~$n$ and~$r$ with $2\leq r<n$. For example, $(\aleph_{210},4,\aleph_0)\rightarrow 32{,}768$. Other order-dimension estimates yield relations such as $(\aleph_{109},4,\aleph_0)\rightarrow 257$ (using an estimate by F\"uredi and Kahn) and $(\aleph_7,4,\aleph_0)\rightarrow 10$ (using an exact estimate by Dushnik).
\end{abstract}

\maketitle

\section{Introduction}\label{S:Intro}
The present paper deals with finding large free sets for infinite set mappings of finite order. For a set~$X$ and a cardinal~$\gl$, we denote by $[X]^\gl$ (resp., $[X]^{<\gl}$) the set of all subsets of~$X$ with~$\gl$ elements (resp., less than~$\gl$ elements). For cardinals~$\gk$ and~$\gl$, a subset~$\cD$ of the powerset~$\Pow(\gk)$ of~$\gk$, and a map $F\colon\cD\to[\gk]^{<\gl}$, we say that a subset~$H$ of~$\gk$ is \emph{free with respect to~$F$} if $F(X)\cap H\subseteq X$ for each $X\in\Pow(H)\cap\cD$. For cardinals $\gk$, $\gl$, $\gr$, and for a positive integer~$r$, the statement $(\gk,r,\gl)\to\gr$ holds if every map $F\colon[\gk]^r\to[\gk]^{<\gl}$ (we say a \emph{set mapping of order~$r$}) has a $\gr$-element free set. Likewise, $(\gk,{<}\go,\gl)\to\gr$ means that every map $F\colon[\gk]^{<\go}\to[\gk]^{<\gl}$ has a $\gr$-element free set. Denote by $\gl^{+n}$ the $n$-th successor of an infinite cardinal~$\gl$. Kuratowski's Free Set Theorem (see~\cite{Kura51} or \cite[Theorem~46.1]{EHMR}) states that $(\gk,n,\gl)\to n+1$ if{f} $\gk\geq\gl^{+n}$, for all infinite cardinals~$\gk$ and~$\gl$ and every non-negative integer~$n$.

Whether larger free subsets can be found leads to unexpected discoveries. An argument that originates in L\'az\'ar~\cite{Laza36} shows that the relation $(\gl^+,1,\gl)\rightarrow\gl^+$ holds for each infinite cardinal~$\gl$ (cf. \cite[Corollary~44.2]{EHMR}). Therefore, the relation $(\gl^+,1,\gl)\rightarrow m$ holds \emph{a fortiori}, for each positive integer~$m$. Further, Hajnal-M\'at\'e~\cite{HaMa75} and Hajnal~\cite{EHMR}, respectively, proved that the relation $(\gl^{+r},r,\gl)\to m$ holds for every infinite cardinal~$\gl$, every $r\in\set{2,3}$, and every integer $m>r$ (cf. \cite[Theorem~46.2]{EHMR}).

In the presence of the Generalized Continuum Hypothesis~$\mathsf{GCH}$, we can say more. Indeed, it follows from \cite[Theorem~45.5]{EHMR} that the relation $(\gl^{+r},r,\gl)\to\gl^+$ holds for every integer~$r\geq2$ and every infinite cardinal~$\gl$.

On the other hand, without assuming~$\mathsf{GCH}$, the situation gains a touch of strangeness. Set $t_0:=5$, $t_1:=7$, and for each positive integer~$n$, $t_{n+1}$ is the least positive integer such that $t_{n+1}\rightarrow(t_n,7)^5$ (the latter notation meaning that for each $f\colon[t_{n+1}]^5\to\set{0,1}$, either there exists a $t_n$-element subset~$X$ of~$t_{n+1}$ such that $f``[X]^5=\set{0}$ or there exists a $7$-element subset~$X$ of~$t_{n+1}$ such that $f``[X]^5=\set{1}$). The existence of the sequence $\famm{t_n}{n<\go}$ is ensured by Ramsey's Theorem. It is established in Komj\'ath and Shelah~\cite{KoSh00} that for each positive integer~$n$, there exists a generic extension of the universe in which $(\aleph_n,4,\aleph_0)\not\rightarrow t_n$. In particular, there exists a generic extension of the universe in which $(\aleph_4,4,\aleph_0)\not\rightarrow t_4$.

By using algebraic tools of completely different nature and purpose introduced in~\cite{GillTh,Gill1} called \emph{compatible norm-coverings}, the first author established in \cite[Th\'e\-o\-r\`e\-me~3.3.13]{GillTh} the relation
 \begin{equation}\label{Eq:glrr+2}
 (\gl^{+r},r,\gl)\to r+2
 \end{equation}
for each positive integer~$r$ and each infinite cardinal~$\gl$, thus improving by one the cardinality of the free set given by Kuratowski's Free Set Theorem, and thus solving in the affirmative the question, raised in \cite[page~285]{EHMR}, whether $(\aleph_4,4,\aleph_0)\to6$. In the present paper, we develop a self-contained approach to such questions, and we extend the methods in order to find further large free sets results. In particular, we establish the relation
 \begin{equation}\label{Eq:ComplBound}
 (\gl^{+(n-1)},r,\gl)\rightarrow2^{\intgr{\frac{1}{2}(1-2^{-r})^{-n/r}}}\,,
 \end{equation}
for each infinite cardinal~$\gl$ and for all positive integers~$n$ and~$r$ with $2\leq r<n$; here $\intgr x$ denotes the largest integer below any real number~$x$.

Our main idea is to extend the $(\gk,r,\gl)\to\gr$ notation by introducing a partially ordered set (poset) parameter, thus defining the notation $(\gk,{<}\gl)\leadsto P$, for cardinals~$\gk$, $\gl$ and a poset~$P$, see Definition~\ref{D:InfCombP}. In particular, the statements $(\gk,{<}\gl)\leadsto([\gr]^{<\go},\subseteq)$ and $(\gk,{<}\go,\gl)\to\gr$ are equivalent, for all cardinals $\gk$, $\gl$, and~$\gr$ (cf. Proposition~\ref{P:TwoArrEquiv}). Then we define the \emph{Kuratowski index} of a finite poset~$P$ as the least non-negative integer~$n$ such that $(\gk^{+(n-1)},{<}\gk)\leadsto P$ holds for each infinite cardinal~$\gk$ (cf. Definition~\ref{D:KurInd}), with a minor adjustment for antichains. This definition is tailored in order to ensure that the finite Boolean lattice~$\Pow(n)$ of all subsets of~$n$ has Kuratowski index~$n$, for each positive integer~$n$. Smaller posets have smaller Kuratowski index (Proposition~\ref{P:KurIndIncr}) while the Kuratowski index function is subadditive on finite direct products (cf. Proposition~\ref{P:AddKur}). As a corollary, the Kuratowski index is bounded above by the order-dimension (cf. Proposition~\ref{P:UppBd}).

We apply these results to the \emph{truncated cubes} $\rB_m({\les}r)$ (cf. Notation~\ref{Not:B(k,m)}). In particular, we present the first author's proof of the relation~\eqref{Eq:glrr+2}, which uses the fact that the order-dimension of~$\rB_{n+2}({\les}n)$ (and thus also its Kuratowski index) is equal to~$n+1$. By using a further estimate of the order-dimension established in a 1971 paper by Spencer~\cite{Spen71}, we deduce the relation~\eqref{Eq:ComplBound} (cf. Proposition~\ref{P:LargeFreeSets}). For example, $(\aleph_{210},4,\aleph_0)\rightarrow 32{,}768$.

Another estimate of the order-dimension of~$\rB_m({\les}r)$, originating from a 1986 paper by F\"uredi and Kahn~\cite{FuKa86}, not so good asymptotically but giving larger free sets for smaller alephs (cf. Proposition~\ref{P:FuKa}), yields, for example, the relation
 \[
 (\aleph_{109},4,\aleph_0)\rightarrow 257\,.
 \]
Finally, Dushnik's original exact estimate yields, for example, the relation
 \[
 (\aleph_7,4,\aleph_0)\rightarrow 10\,.
 \]
 
\section{Basic concepts}\label{S:Basic}
\subsection{Set theory}\label{Su:BasicSet}
We shall use basic set-theoretical notation and terminology about ordinals and cardinals. 
We denote by~$f``(X)$, or~$f``X$ (resp., $f^{-1}X$) the image (resp., inverse image) of a set~$X$ under~$f$. Cardinals are initial ordinals. We denote by~$\cf\ga$ the cofinality of an ordinal~$\ga$. We denote by $\go:=\set{0,1,2,\dots}$ the first limit ordinal, mostly denoted by $\aleph_0$ in case it is viewed as a cardinal. We identify every non-negative integer~$n$ with the finite set $\set{0,1,\dots,n-1}$ (so $0=\es$). We denote by~$\gk^+$ the successor cardinal of a cardinal~$\gk$, and we define~$\gk^{+n}$, for a non-negative integer~$n$, by $\gk^{+0}:=\gk$ and $\gk^{+(n+1)}=(\gk^{+n})^+$.

We denote by~$\Pow(X)$ the powerset of a set~$X$, and we set
 \begin{align*}
 [X]^\gk&:=\setm{Y\in\Pow(X)}{\card Y=\gk}\,,\\
 [X]^{<\gk}&:=\setm{Y\in\Pow(X)}{\card Y<\gk}\,,\\
 [X]^{{\les}\gk}&:=\setm{Y\in\Pow(X)}{\card Y\leq\gk}\,,
 \end{align*}
for every cardinal~$\gk$.

\subsection{Partially ordered sets (posets)}\label{Su:Posets}
All our posets will be nonempty. For posets~$P$ and~$Q$, a map $f\colon P\to Q$ is \emph{isotone} if $x\leq y$ implies that $f(x)\leq f(y)$, for all $x,y\in P$.

We denote by~$0_P$ the least element of~$P$ if it exists. An element~$p$ in a poset~$P$ is \emph{\jirr} if $p=\bigvee X$ implies that $p\in X$, for every (possibly empty) finite subset~$X$ of~$P$; we denote by $\J(P)$ the set of all \jirr\ elements of~$P$, endowed with the induced partial ordering. We set
 \begin{align*}
 Q\dnw X&:=\setm{q\in Q}{(\exists x\in X)(q\leq x)}\,,\\
 Q\ddnw X&:=\setm{q\in Q}{(\exists x\in X)(q<x)}\,,
  \end{align*}
for all subsets~$Q$ and~$X$ of~$P$; in case $X=\set{a}$ is a singleton, then we shall write $Q\dnw a$ (resp., $Q\ddnw a$) instead of~$Q\dnw\set{a}$ (resp., $Q\ddnw\set{a}$). We set $\J_P(a):=\J(P)\dnw a$, for each $a\in P$. A subset~$Q$ of~$P$ is a \emph{lower subset of~$P$} if~$P\dnw Q=Q$. We say that~$P$ is a \emph{tree} if~$P$ has a smallest element and~$P\dnw a$ is a chain for each $a\in P$.

\section{An infinite combinatorial statement with a poset parameter}\label{L:leadsto}

The statement referred to in the section title is the following.

\begin{definition}\label{D:InfCombP}
For cardinals $\gk$, $\gl$ and a poset~$P$, let $(\gk,{<}\gl)\leadsto P$ hold if for every mapping $F\colon\Pow(\gk)\to[\gk]^{<\gl}$, there exists a one-to-one map $f\colon P\mono\nobreak\gk$ such that
 \begin{equation}\label{Eq:FreenuJPa}
 F(f``(P\dnw x))\cap f``(P\dnw y)\subseteq f``(P\dnw x)\,,
 \qquad\text{for all }x\leq y\text{ in }P\,.
 \end{equation}
\end{definition}

In most cases, we shall use the symbol $(\gk,{<}\gl)\leadsto P$ only in case~$P$ is \emph{lower finite}, in which case it is sufficient to assume that the mapping~$F$ is defined only on~$[\gk]^{<\go}$ (we can always set $F(X):=\es$ for infinite~$X$). Once this is set, it is of course sufficient to assume that~$F$ is \emph{isotone} (replace~$F(X)$ by $\bigcup\famm{F(Y)}{Y\subseteq X}$).

\begin{lemma}\label{L:SubPosetleadsto}
Let $P$ and $Q$ be posets, with~$Q$ lower finite, and let $\gk$, $\gl$ be cardinals. If $(\gk,{<}\gl)\leadsto Q$ and~$P$ embeds into~$Q$, then $(\gk,{<}\gl)\leadsto P$.
\end{lemma}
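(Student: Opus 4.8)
The plan is to transport a free map for~$Q$ back to~$P$ along the embedding. Fix an order-embedding $e\colon P\into Q$ and write $P':=e``P$ for its image, an isomorphic copy of~$P$ sitting inside~$Q$. Since $e$ restricts to an order-embedding of $P\dnw x$ into $Q\dnw e(x)$ for each $x\in P$ and $Q$ is lower finite, the poset~$P$ is lower finite as well; by the remark following Definition~\ref{D:InfCombP} it therefore suffices to verify~\eqref{Eq:FreenuJPa} for \emph{isotone} maps, and I shall assume from the outset that $F\colon\Pow(\gk)\to[\gk]^{<\gl}$ is isotone.

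Given such an~$F$, I would apply the hypothesis $(\gk,{<}\gl)\leadsto Q$ to~$F$ itself, obtaining a one-to-one map $g\colon Q\mono\gk$ with
\begin{equation*}
F(g``(Q\dnw u))\cap g``(Q\dnw v)\subseteq g``(Q\dnw u),\qquad\text{for all }u\leq v\text{ in }Q.
\end{equation*}
I then set $f:=g\circ e\colon P\mono\gk$, which is one-to-one as a composite of one-to-one maps. The point is to read off~\eqref{Eq:FreenuJPa} for~$f$ from this inequality for~$g$.

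Fix $x\leq y$ in~$P$ and put $u:=e(x)$, $v:=e(y)$, so that $u\leq v$ in~$Q$ because $e$ is an embedding. The key identity is $e``(P\dnw x)=(Q\dnw u)\cap P'$, whence $f``(P\dnw x)=g``\bigl((Q\dnw u)\cap P'\bigr)$, and likewise for~$y$. Now consider $\xi\in F(f``(P\dnw x))\cap f``(P\dnw y)$. On one hand $\xi\in f``(P\dnw y)\subseteq g``P'$. On the other hand, since $f``(P\dnw x)\subseteq g``(Q\dnw u)$ and~$F$ is isotone we get $\xi\in F(g``(Q\dnw u))$, and since $f``(P\dnw y)\subseteq g``(Q\dnw v)$ the freeness inequality for~$g$ yields $\xi\in g``(Q\dnw u)$. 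Hence $\xi\in g``(Q\dnw u)\cap g``P'$, and because $g$ is one-to-one this intersection equals $g``\bigl((Q\dnw u)\cap P'\bigr)=f``(P\dnw x)$. This is precisely~\eqref{Eq:FreenuJPa} for~$f$.

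\textbf{Main obstacle.} The essential difficulty is that $e``(P\dnw x)$ is in general a \emph{proper} subset of $Q\dnw e(x)$: the lower set of $e(x)$ in~$Q$ may contain elements lying outside the image of~$e$, so the freeness inequality for~$g$ only locates~$\xi$ in the possibly larger set $g``(Q\dnw u)$. The two ingredients that close this gap are the isotonicity of~$F$, which lets me enlarge the argument $f``(P\dnw x)$ to $g``(Q\dnw u)$ before applying~$F$, together with the injectivity of~$g$, which lets me intersect $g``(Q\dnw u)$ back down with $g``P'$ to recover exactly $g``((Q\dnw u)\cap P')$. Checking the set identity $e``(P\dnw x)=(Q\dnw u)\cap P'$ and the commutation $g``B\cap g``C=g``(B\cap C)$ for a one-to-one~$g$ is then routine.
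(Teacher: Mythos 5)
Your proof is correct and follows essentially the same route as the paper's: apply the hypothesis for~$Q$ to the (isotonized) map~$F$, pull the resulting free map back along the embedding, and use isotonicity of~$F$ together with injectivity of~$g$ to land back in $f``(P\dnw x)$. The only difference is cosmetic — the paper assumes $P$ is a sub-poset of~$Q$ and dismisses the final injectivity step as obvious, whereas you keep the embedding explicit and write that step out.
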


\begin{proof}
We may assume that~$P$ is a sub-poset of~$Q$. Let $F\colon[\gk]^{<\go}\to[\gk]^{<\gl}$ be isotone. By assumption, there exists a one-to-one map $g\colon Q\mono\nobreak\gk$ such that
 \[
 F(g``(Q\dnw x))\cap g``(Q\dnw y)\subseteq g``(Q\dnw x)\,,
 \qquad\text{for all }x\leq y\text{ in }Q\,.
 \]
The restriction~$f$ of~$g$ to~$P$ is one-to-one. As~$F$ is isotone and $f``(P\dnw x)\subseteq g``(Q\dnw x)$ for each $x\in P$, \eqref{Eq:FreenuJPa} is obviously satisfied.
\end{proof}

The following lemma states that the $(\gk,{<}\gl)\leadsto P$ relation can be ``verified on the \jirr\ elements''.

\begin{lemma}\label{L:EquivKurat}
Let $P$ be a lower finite poset and let~$\gl$ and~$\gk$ be infinite cardinals. Then $(\gk,{<}\gl)\leadsto P$ if{f} for every isotone mapping $G\colon[\gk]^{<\go}\to[\gk]^{<\gl}$, there exists a one-to-one map $g\colon\J(P)\mono\nobreak\gk$ such that
 \begin{equation}\label{Eq:FreenuJPa2}
 G(g``\J_P(x))\cap g``\J_P(y)\subseteq g``\J_P(x)\,,
 \qquad\text{for all }x\leq y\text{ in }P\,.
 \end{equation}
\end{lemma}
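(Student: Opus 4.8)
\emph{Overview and a preliminary fact.} The plan is to prove the two implications separately. Throughout I would rely on one structural fact about lower finite posets: every element is the join of the \jirr\ elements below it, so that for all $p,q\in P$ one has $p\le q$ if{f} $\J_P(p)\subseteq\J_P(q)$; in particular the (finite) down-set $P\dnw x$ is recovered from $\J_P(x)$ through $p\le x\Leftrightarrow\J_P(p)\subseteq\J_P(x)$. I would establish this by induction on $\card(P\dnw p)$: a non-\jirr\ element is, by definition, a join of elements strictly below it, to which the induction hypothesis applies.

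\emph{Forward direction.} Assume $(\gk,{<}\gl)\leadsto P$ and let $G\colon[\gk]^{<\go}\to[\gk]^{<\gl}$ be isotone. Regarding $G$ as an admissible map (legitimate since $P$ is lower finite), pick $f\colon P\mono\gk$ as in~\eqref{Eq:FreenuJPa} and put $g:=f\res\J(P)$, which is one-to-one. To check~\eqref{Eq:FreenuJPa2} for $x\le y$, take $z\in G(g``\J_P(x))\cap g``\J_P(y)$. As $\J_P(x)\subseteq P\dnw x$ and $G$ is isotone, $z\in G(f``(P\dnw x))$; writing $z=f(q)$ with $q\in\J(P)$, $q\le y$ gives $z\in f``(P\dnw y)$; now~\eqref{Eq:FreenuJPa} forces $z\in f``(P\dnw x)$, i.e.\ $q\le x$, i.e.\ $z\in g``\J_P(x)$. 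This direction is immediate.

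\emph{Backward direction.} Split $\gk=\gk_0\sqcup\gk_1$ into two sets of size~$\gk$, put $N:=P\setminus\J(P)$, and fix an injection $c\colon[\gk_0]^{<\go}\times N\mono\gk_1$ coding (trace,\ element)-pairs. Given an isotone $F\colon\Pow(\gk)\to[\gk]^{<\gl}$, define a completion $\ol S:=S\cup\setm{c(T,p)}{p\in N,\ T\subseteq S\cap\gk_0}$ and then
\[
G(S):=\bigl(F(\ol S)\cap\gk_0\bigr)\cup\bigcup\setm{T}{\exists p\in N,\ c(T,p)\in F(\ol S)}\,.
\]
The second term reflects every coded point that~$F$ happens to capture back onto its trace inside~$\gk_0$, so that condition~\eqref{Eq:FreenuJPa2}, which only sees~$\gk_0$, can nevertheless control it; since $\card F(\ol S)<\gl$ and traces are finite, $G(S)\in[\gk_0]^{<\gl}$, and $G$ is visibly isotone. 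Applying~\eqref{Eq:FreenuJPa2} to~$G$ (carried out with ambient set~$\gk_0$, so its range lands in~$\gk_0$) yields $g\colon\J(P)\mono\gk_0$, which I extend to $f\colon P\mono\gk$ by $f(p):=c(g``\J_P(p),p)$ for $p\in N$; injectivity follows from $\gk_0\cap\gk_1=\es$ and injectivity of~$c$. To verify~\eqref{Eq:FreenuJPa}, note first that $f``(P\dnw x)\subseteq\ol{g``\J_P(x)}$ by the preliminary fact, so monotonicity gives $F(f``(P\dnw x))\subseteq F(\ol{g``\J_P(x)})$. Given $z\in F(f``(P\dnw x))\cap f``(P\dnw y)$, split on its block. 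If $z\in\gk_0$ then $z\in g``\J_P(y)$ and $z\in F(\ol{g``\J_P(x)})\cap\gk_0\subseteq G(g``\J_P(x))$, so~\eqref{Eq:FreenuJPa2} places $z\in g``\J_P(x)\subseteq f``(P\dnw x)$. If $z\in\gk_1$ then $z=c(g``\J_P(p),p)$ with $p\le y$; the reflection term forces the whole trace $g``\J_P(p)$ into $G(g``\J_P(x))$, and since each of its elements is the $g$-image of a \jirr\ element $\le p\le y$, hence lies in $g``\J_P(y)$, condition~\eqref{Eq:FreenuJPa2} pushes $g``\J_P(p)\subseteq g``\J_P(x)$; thus $\J_P(p)\subseteq\J_P(x)$ and, by the preliminary fact, $p\le x$, i.e.\ $z\in f``(P\dnw x)$.

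\emph{Main obstacle.} I expect the difficulty to sit entirely in the backward direction, and precisely in the design of~$G$: the images of non-\jirr\ elements are invisible to~\eqref{Eq:FreenuJPa2} as written, and a naive completion only enlarges $F$'s input (via monotonicity) without gaining any control over those images. The device that repairs this—coding each non-\jirr\ image by its \jirr-trace and then \emph{reflecting} any captured code back into~$\gk_0$—is what lets a single application of the hypothesis suffice, with the preliminary fact $p\le x\Leftrightarrow\J_P(p)\subseteq\J_P(x)$ converting ``the whole trace is forced below~$x$'' into ``$p\le x$''. The remaining bookkeeping (keeping $G(S)$ of size ${<}\gl$, running~\eqref{Eq:FreenuJPa2} inside~$\gk_0$, and the block-splitting securing injectivity of~$f$) is routine once this idea is in place.
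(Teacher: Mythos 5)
Your forward direction is fine, and your backward direction rests on the same key idea as the paper's proof: encode the image of each non-\jirr\ element by a code determined by its \jirr\ trace, and let $G$ ``reflect'' any captured code back onto that trace. But your implementation has a genuine gap. A lower finite poset need not be finite, so $N=P\setminus\J(P)$ can be infinite (e.g.\ $P=[\gr]^{<\go}$, which is exactly the case the lemma is applied to in Proposition~\ref{P:TwoArrEquiv}). Your completion $\ol S\supseteq\setm{c(\es,p)}{p\in N}$ is then an infinite set, and the step ``monotonicity gives $F(f``(P\dnw x))\subseteq F(\ol{g``\J_P(x)})$'' needs $F$ to be isotone on a pair (finite set, infinite set). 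That cannot be assumed without loss of generality: the reduction to isotone $F$ noted after Definition~\ref{D:InfCombP} works only on $[\gk]^{<\go}$, since the isotone hull $X\mapsto\bigcup\famm{F(Y)}{Y\in[X]^{<\go}}$ need not land in $[\gk]^{<\gl}$ for infinite $X$. Worse, a map $F\colon\Pow(\gk)\to[\gk]^{<\gl}$ that genuinely is isotone on all of $\Pow(\gk)$ satisfies $\bigcup\famm{F(X)}{X\subseteq\gk}\subseteq F(\gk)\in[\gk]^{<\gl}$, so for such $F$ the conclusion is trivial (take $f$ with range inside $\gk\setminus F(\gk)$); hence, when $N$ is infinite, what your argument establishes is far weaker than what Definition~\ref{D:InfCombP} demands.

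The repair is small and turns your argument into the paper's: drop the second component of the code. If $c$ depends only on the trace $T$ and you set $f(p):=c(g``\J_P(p))$ for $p\in N$, then $\ol S=S\cup\setm{c(T)}{T\subseteq S\cap\gk_0}$ is finite for finite $S$, so $F$ is only ever evaluated on finite sets and the isotone reduction applies; injectivity of $f$ no longer needs the $p$-tag, because $p\mapsto\J_P(p)$ is already one-to-one on a lower finite poset (your ``preliminary fact''). This is precisely the paper's construction, phrased there with a bijection $\gf\colon[\gk]^{<\go}\to\gk$, the definition $f(x):=\gf(g``\J_P(x))$ for all $x\in P$, and the reflection map $\Phi(X):=\bigcup\gf^{-1}X$.
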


\begin{proof}
It is trivial that $(\gk,{<}\gl)\leadsto P$ entails the given condition. For the converse, we shall fix a bijection $\gf\colon[\gk]^{<\go}\to\gk$, and set
 \[
 \Phi(X):=\bigcup\gf^{-1}X\,,\quad\text{for each }X\in\Pow(\gk)\,.
 \]
That is, $\Phi(X)$ is the union of all finite subsets~$Y$ of~$\gk$ such that $\gf(Y)\in X$. In particular, $\Phi(X)$ belongs to $[\gk]^{<\gl}$ whenever $X$ belongs to $[\gk]^{<\gl}$.
Now let $F\colon[\gk]^{<\go}\to[\gk]^{<\gl}$ be an isotone map. We can define a map $G\colon[\gk]^{<\go}\to[\gk]^{<\gl}$ by the rule
 \[
 G(X):=\Phi\bigl(F\bigl(\setm{\gf(Y)}{Y\subseteq X}\bigr)\bigr)\,,\quad
 \text{for each }X\in[\gk]^{<\go}\,.
 \]
By assumption, there exists a one-to-one map $g\colon\J(P)\mono\gk$ which satisfies the condition~\eqref{Eq:FreenuJPa2}. As~$P$ is lower finite, we can define a map $f\colon P\to\gk$ by the rule
 \[
 f(x):=\gf(g``\J_P(x))\,,\quad\text{for each }x\in P\,.
 \]
As~$P$ is lower finite, every element of~$P$ is the join of all elements of~$\J(P)$ below it. Hence, as both~$g$ and~$\gf$ are one-to-one, $f$ is one-to-one as well.

Now we prove that the condition~\eqref{Eq:FreenuJPa} holds. Let $x\leq y$ in~$P$ and let $u\in P\dnw y$ such that $f(u)\in F(f``(P\dnw x))$, we must prove that $u\leq x$. As
 \[
 f``(P\dnw x)=\setm{\gf(g``\J_P(v))}{v\in P\dnw x}
 \subseteq\setm{\gf(Y)}{Y\subseteq g``\J_P(x)}
 \]
and~$F$ is isotone, we get
 \[
 \gf(g``\J_P(u))=f(u)\in F(f``(P\dnw x))\subseteq
 F(\setm{\gf(Y)}{Y\subseteq g``\J_P(x)})\,,
 \]
thus $g``\J_P(u)\subseteq\Phi\bigl(F(\setm{\gf(Y)}{Y\in g``\J_P(x)})\bigr)=G(g``\J_P(x))$. This means that for each $p\in\J_P(u)$, $g(p)$ belongs to $G(g``\J_P(x))$. As~$g(p)$ also belongs to $g``\J_P(y)$ (because $p\leq u\leq y$), it follows, using~\eqref{Eq:FreenuJPa2}, that $g(p)\in g``\J_P(x)$, and so $p\leq x$. As this holds for each $p\in\J_P(u)$, we obtain that $u\leq x$, as was to be proved.
\end{proof}

We shall now recall a few known facts about the relation $(\gk,{<}\go,\gl)\to\nobreak\gr$ (cf. Section~\ref{S:Intro}). In case $\gl\geq\aleph_1$ and~$\gr\geq\aleph_0$, the existence of~$\gk$ such that $(\gk,{<}\go,\gl)\to\gr$ is a large cardinal axiom, that entails the existence of~$0^{\#}$ in case~$\gr\geq\aleph_1$ (cf.~\cite{DePa}, and also \cite{Koep84,Koep89} for further related consistency strength results). The relation $(\gk,{<}\go,\gl)\to\gr$ follows from the infinite partition property $\gk\to(\gq)^{<\go}_2$ (\emph{existence of the $\gq^{\mathrm{th}}$ Erd\H{o}s cardinal}) where $\gq:=\max\set{\gr,\gl^+}$, see \cite[Theorem~45.2]{EHMR} and the discussion preceding it. This notation is related to ours \emph{via} the following easy result.

\begin{proposition}\label{P:TwoArrEquiv}
The statements $(\gk,{<}\gl)\leadsto([\gr]^{<\go},\subseteq)$ and $(\gk,{<}\go,\gl)\to\gr$ are equivalent, for every cardinal~$\gr$ and all infinite cardinals $\gk$, $\gl$.
\end{proposition}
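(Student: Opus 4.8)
The plan is to reduce both statements to a single common form by means of Lemma~\ref{L:EquivKurat}, after unravelling the join-irreducible structure of the poset $P:=([\gr]^{<\go},\subseteq)$. First I would record that $P$ is lower finite: $P\dnw x=\Pow(x)$ is finite for every $x\in[\gr]^{<\go}$, so Lemma~\ref{L:EquivKurat} applies. Next I compute $\J(P)$. The \jirr\ elements of $([\gr]^{<\go},\subseteq)$ are exactly the singletons $\set\alpha$ for $\alpha\in\gr$, and these form an antichain; hence $\J(P)$ is naturally identified with~$\gr$, and under this identification $\J_P(x)$ corresponds to the finite set~$x$ itself. A one-to-one map $g\colon\J(P)\mono\gk$ thus becomes a one-to-one map $h\colon\gr\mono\gk$ with $g``\J_P(x)=h``x$, so that condition~\eqref{Eq:FreenuJPa2} reads
\[
G(h``x)\cap h``y\subseteq h``x,\qquad\text{for all finite }x\subseteq y\subseteq\gr.
\]

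The heart of the argument is to see that, writing $H:=h``\gr$ (a subset of~$\gk$ of cardinality exactly~$\gr$, since~$h$ is one-to-one), this condition on~$h$ is equivalent to $H$ being free with respect to~$G$, that is, $G(X)\cap H\subseteq X$ for every finite $X\subseteq H$. One direction is immediate: if $H$ is free and $x\subseteq y$ are finite, then $G(h``x)\cap h``y\subseteq G(h``x)\cap H\subseteq h``x$. For the converse, given a finite $X\subseteq H$ write $X=h``x$ and take $a=h(\beta)\in G(X)\cap H$; applying the condition to the comparable pair $x\subseteq x\cup\set\beta$ forces $a\in h``(x\cup\set\beta)\cap G(h``x)\subseteq h``x=X$. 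This is the crux of the proof: it works precisely because $[\gr]^{<\go}$ is closed under adjoining a single element, so that freeness of~$X$ against one point at a time is exactly condition~\eqref{Eq:FreenuJPa2} on the pairs $x\subseteq x\cup\set\beta$. I expect this translation, rather than any computation, to be the step deserving care.

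Finally I would dispatch two routine reductions to isotone maps. By Lemma~\ref{L:EquivKurat} together with the preceding paragraph, $(\gk,{<}\gl)\leadsto P$ is equivalent to the assertion that every isotone $G\colon[\gk]^{<\go}\to[\gk]^{<\gl}$ has a $\gr$-element free set. On the other side, $(\gk,{<}\go,\gl)\to\gr$ reduces to isotone maps as well: for arbitrary $F\colon[\gk]^{<\go}\to[\gk]^{<\gl}$, its isotone hull $\widetilde F(X):=\bigcup\famm{F(Y)}{Y\subseteq X}$ again lands in $[\gk]^{<\gl}$ (a finite union of sets of size~$<\gl$, with~$\gl$ infinite), and any free set for~$\widetilde F$ is free for~$F$ because $F(X)\subseteq\widetilde F(X)$. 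Hence $(\gk,{<}\go,\gl)\to\gr$ is likewise equivalent to every isotone~$G$ admitting a $\gr$-element free set, and comparing the two equivalences yields the claim. In the backward passage I use that a free set of cardinality~$\gr$ may be enumerated by a one-to-one $h\colon\gr\mono\gk$, whereupon the comparable-pair condition holds by the easy direction above and Lemma~\ref{L:EquivKurat} delivers $(\gk,{<}\gl)\leadsto P$.
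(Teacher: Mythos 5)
Your proof is correct and takes essentially the same route as the paper's: both identify $\J([\gr]^{<\go})$ with the singletons (hence with~$\gr$), invoke Lemma~\ref{L:EquivKurat}, and translate condition~\eqref{Eq:FreenuJPa2} into the freeness of the image $H=h``(\gr)$ with respect to the set mapping. You merely make explicit two steps the paper leaves implicit, namely the one-element-at-a-time comparison of $x$ with $x\cup\set{\gb}$ and the reduction to isotone maps.
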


\begin{proof}
We use the characterization of the $\leadsto$ relation given by Lemma~\ref{L:EquivKurat}.
First observe that $\J([\gr]^{<\go})=\setm{\set{\gx}}{\gx<\gr}$. Assume that the statement $(\gk,{<}\gl)\leadsto[\gr]^{<\go}$ holds, and let $F\colon[\gk]^{<\go}\to[\gk]^{<\gl}$. By using the easy direction of Lemma~\ref{L:EquivKurat}, we obtain a one-to-one map $f\colon\gr\mono\gk$ such that, putting $H:=f``(\gr)$,
 \begin{equation}\label{Eq:VarFreenuJPa}
 F(f``(p))\cap H\subseteq f``(p)\,,\qquad\text{for each }p\in[\gr]^{<\go}\,.
 \end{equation}
This means that $H$ is free with respect to~$F$.

Conversely, assume that $(\gk,{<}\go,\gl)\to\gr$ holds, and let $F\colon\Pow(\gk)\to[\gk]^{<\gl}$. By assumption, there exists $H\in[\gk]^\gr$ which is free with respect to~$F$. Then \eqref{Eq:VarFreenuJPa} holds, for any one-to-one map~$f\colon\gr\mono H$. Hence $(\gk,{<}\gl)\leadsto([\gr]^{<\go},\subseteq)$ holds.
\end{proof}

The following observation illustrates the difficulties of checking Definition~\ref{D:InfCombP} on arbitrary posets. 

\begin{proposition}\label{P:NoFreeNonLF}
There exists no cardinal~$\gk$ satisfying $(\gk,{<}\aleph_1)\leadsto(\Pow(\go),\subseteq)$.
\end{proposition}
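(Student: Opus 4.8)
The plan is to recast condition~\eqref{Eq:FreenuJPa} in a transparent single‑variable form, dispose of the degenerate range of cardinals, and then build, for each remaining~$\gk$, one mapping~$F$ that defeats every candidate~$f$.

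\textbf{A one‑variable reformulation.} Fix $F\colon\Pow(\gk)\to[\gk]^{<\aleph_1}$ and a one‑to‑one map $f\colon\Pow(\go)\mono\gk$. Since $(\Pow(\go),\subseteq)\dnw x=\Pow(x)$ for each $x\subseteq\go$, I would first check that~\eqref{Eq:FreenuJPa} is equivalent to the statement that $f(b)\notin F(f``\Pow(x))$ whenever $b\not\subseteq x$ in~$\Pow(\go)$; equivalently, that $F(f``\Pow(x))\cap\rng f\subseteq f``\Pow(x)$ for every $x\in\Pow(\go)$. Both implications are routine, the one from~\eqref{Eq:FreenuJPa} obtained by applying it with $y:=x\cup b$ and using that~$f$ is one‑to‑one. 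Thus the proposition reduces to producing, for each~$\gk$, a \emph{countable‑valued}~$F$ admitting no one‑to‑one~$f$ with $F(f``\Pow(x))\cap\rng f\subseteq f``\Pow(x)$ for all~$x$.

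\textbf{The degenerate case.} If $\gk<2^{\aleph_0}$ there is no one‑to‑one map $\Pow(\go)\mono\gk$ whatsoever, so $(\gk,{<}\aleph_1)\leadsto\Pow(\go)$ fails for every~$F$; hence I may assume $\gk\geq2^{\aleph_0}$, where one‑to‑one maps are plentiful and the obstruction must be engineered into~$F$.

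\textbf{Building a bad~$F$, and the main obstacle.} The feature to exploit is precisely that $\Pow(\go)$ is \emph{not} lower finite: every infinite $x\subseteq\go$ satisfies $\card\Pow(x)=2^{\aleph_0}$, so each image $f``\Pow(x)$ has size continuum, while~$F$ returns only countable sets. I would fix a well‑ordering of~$\gk$ together with an almost disjoint family $\famm{a_i}{i<2^{\aleph_0}}$ of infinite subsets of~$\go$; for any one‑to‑one~$f$ the images $f``\Pow(a_i)$ then form an almost disjoint family of continuum‑sized subsets of~$\gk$ whose pairwise intersections lie in the countable set $f``[\go]^{<\go}$, while each ``fresh'' point $f(a_i)$ lies in $f``\Pow(a_i)$ yet outside every $f``\Pow(a_j)$ with $j\neq i$. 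The aim is to define~$F$, by recursion along the well‑ordering, so that on each set that can occur as some $f``\Pow(x)$ its value already meets $\rng f$ outside that set, producing some~$x$ and some $b\not\subseteq x$ with $f(b)\in F(f``\Pow(x))$ and contradicting freeness. The hard part—and the reason~\eqref{Eq:FreenuJPa} is delicate outside the lower finite setting of Lemmas~\ref{L:EquivKurat} and~\ref{L:SubPosetleadsto}—is that~$F$ sees only the raw set $S=f``\Pow(x)$, not~$f$, so it cannot name a point of $\rng f\setminus S$ by any order‑theoretic device (least missing ordinal, supremum, gap): such a point may fall into a sparse or high region of~$\gk$ that~$f$ uses to hide its range. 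Overcoming this forces the construction to rely on the rigidity $f(b)\in f``\Pow(x)\Leftrightarrow b\subseteq x$ and on the almost disjoint structure above, and the crux of the verification is to show that the countably many commitments imposed on each value $F(S)$ never overflow the bound $[\gk]^{<\aleph_1}$, so that a single~$F$ can simultaneously defeat all one‑to‑one~$f$.
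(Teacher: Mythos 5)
Your preliminary reductions are correct --- the freeness condition for $P=\Pow(\go)$ does collapse to the single-variable form $F(f``(P\dnw x))\cap\rng f\subseteq f``(P\dnw x)$, and the case $\gk<2^{\aleph_0}$ is indeed vacuous --- but the proposal stops exactly where the proof has to begin: no map~$F$ is ever constructed. You describe an ``aim'' (a recursion along a well-ordering of~$\gk$, supported by an almost disjoint family, meeting every set of the form $f``(P\dnw x)$ outside itself), you identify the obstruction (that $F$ sees only the raw set and must defeat all injections~$f$ simultaneously with countable values), and you declare that ``the crux of the verification'' is to check that the commitments never overflow $[\gk]^{<\aleph_1}$ --- but none of this is carried out, and it is not at all clear that such a pointwise recursion can work: there are far too many candidate sets $f``(P\dnw x)$ for countably many commitments per value of~$F$ to anticipate them all. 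As it stands this is a plan together with an acknowledged unsolved difficulty, not a proof.

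The paper's argument shows that the difficulty you flag is a red herring: $F$ never needs to ``name a point of $\rng f$ outside~$S$''. Working at the level of the \jirr\ elements of $\Pow(\go)$ (so with a one-to-one $f\colon\go\mono\gk$ and the countable sets $f``p$ for $p\subseteq\go$), one lets $X\equiv_{\mathrm{fin}}Y$ mean that $X\mathbin{\triangle}Y$ is finite, fixes a transversal~$\Delta$ of the $\equiv_{\mathrm{fin}}$-classes of subsets of~$\gk$, and defines $F(X)$ as the unique member of $[X]_{\equiv_{\mathrm{fin}}}\cap\Delta$ for countable~$X$ (and $\es$ otherwise); this is legitimately countable-valued because $F(X)\equiv_{\mathrm{fin}}X$. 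The contradiction then comes from the fact that $F$ is \emph{constant} along the mod-finite-trivial chain $f``(\go\setminus(m+1))\subseteq f``(\go)$ while $F(f``(\go))$ almost contains $f``(\go)$: choosing $m$ with $f``(\go\setminus m)\subseteq F(f``(\go))$ and setting $p:=\go\setminus(m+1)$ yields $f(m)\in F(f``(p))\cap f``(\go)\subseteq f``(p)$, which is absurd since $f$ is one-to-one. The non-lower-finiteness of $\Pow(\go)$ is thus exploited through the existence of $p\subsetneq\go$ with $f``(p)$ cofinite in $f``(\go)$ --- not through a well-ordering of~$\gk$ or an almost disjoint family. Your setup is compatible with this, but the construction of~$F$ is the entire content of the proposition and is missing from your proposal.
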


\begin{proof}
Observe that $\J(\Pow(\go))=\setm{\set{n}}{n<\go}$.
Let $X\equiv_{\mathrm{fin}}Y$ hold if the symmetric difference~$X\mathbin{\triangle}Y$ is finite, for all subsets~$X$ and~$Y$ of~$\gk$. Let~$\Delta$ be a set that meets the $\equiv_{\mathrm{fin}}$-equivalence class~$[X]_{\equiv_{\mathrm{fin}}}$ of~$X$ in exactly one point, for each~$X\in\Pow(\gk)$, and denote by~$F(X)$ the unique element of~$[X]_{\equiv_{\mathrm{fin}}}\cap\Delta$ if~$X$ is at most countable, $\es$ otherwise. If the relation $(\gk,{<}\aleph_1)\leadsto(\Pow(\go),\subseteq)$ holds, then there is a one-to-one map $f\colon\go\mono\gk$ such that $F(f``(p))\cap f``(q)\subseteq f``(p)$ for all~$p\subseteq q$ in~$\Pow(\go)$. As $F(f``(\go))\equiv_{\mathrm{fin}}f``(\go)$, there exists $m<\go$ such that $f``(\go\setminus m)\subseteq F(f``(\go))$. Put $p:=\go\setminus(m+1)$. It follows from the relation $f``(p)\equiv_{\mathrm{fin}}f``(\go)$ that $F(f``(p))=F(f``(\go))$, thus
 \[
 f(m)\in F(f``(\go))\cap f``(\go)=F(f``(p))\cap f``(\go)\subseteq f``(p)\,,
 \]
while $f(m)\notin f``(p)$, a contradiction.
\end{proof}

\section[Kuratowski index]{The Kuratowski index of a finite poset}\label{S:KuratInd}

\begin{definition}\label{D:KurInd}
A non-negative integer~$n$ is \emph{\textbf{a} Kuratowski index} of a finite poset~$P$ if either~$P$ is an antichain, or~$P$ is not an antichain, $n>0$, and the relation $(\gk^{+(n-1)},{<}\gk)\leadsto P$ holds for each infinite cardinal~$\gk$. Furthermore, we shall call \emph{\textbf{the} Kuratowski index of~$P$} the least such~$n$ if it exists (we shall see shortly that it does), and we shall denote it by~$\kur(P)$.
\end{definition}

The reason of our adjustment of Definition~\ref{D:KurInd} for antichains lies essentially in the fact that the relation $(\gk,{<}\gl)\leadsto P$ trivially holds for every antichain~$P$ and all cardinals~$\gk$, $\gl$ such that $\gk\geq\card P$.

\begin{proposition}\label{P:Existskur(P)}
The number of \jirr\ elements in~$P$ is a Kuratowski index of~$P$, for each finite poset~$P$. Hence, $\kur(P)$ is defined and $\kur(P)\leq\card\J(P)$.
\end{proposition}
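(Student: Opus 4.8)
The plan is to prove that $n:=\card\J(P)$ is a Kuratowski index of~$P$, splitting on whether~$P$ is an antichain. If~$P$ is an antichain, then~$n$ is automatically a Kuratowski index by Definition~\ref{D:KurInd}, so nothing is to be proved. Assume henceforth that~$P$ is not an antichain. Since~$P$ is finite and, as noted in the proof of Lemma~\ref{L:EquivKurat}, every element of~$P$ is the join of the members of~$\J(P)$ below it, emptiness of~$\J(P)$ would force~$P$ to collapse to its least element, contradicting that~$P$ is not an antichain; hence $n\geq1$, as required. It then remains to establish the relation $(\gk^{+(n-1)},{<}\gk)\leadsto P$ for each infinite cardinal~$\gk$.

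Set $\gm:=\gk^{+(n-1)}$ and invoke the criterion of Lemma~\ref{L:EquivKurat}: given an isotone map $G\colon[\gm]^{<\go}\to[\gm]^{<\gk}$, I must produce a one-to-one map $g\colon\J(P)\mono\gm$ satisfying~\eqref{Eq:FreenuJPa2}. The idea is to feed the restriction $F:=G\res[\gm]^{n-1}$, a set mapping of order~$n-1$ from~$[\gm]^{n-1}$ into $[\gm]^{<\gk}$, into Kuratowski's Free Set Theorem. Since $\gm=\gk^{+(n-1)}\geq\gk^{+(n-1)}$, the relation $(\gm,n-1,\gk)\to n$ holds, so~$F$ admits an $n$-element free set~$H$; that is, $G(X)\cap H\subseteq X$ for every $(n-1)$-element subset~$X$ of~$H$. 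As $\card\J(P)=n=\card H$, I may then choose~$g$ to be any bijection from~$\J(P)$ onto~$H$.

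To check~\eqref{Eq:FreenuJPa2}, I fix $x\leq y$ in~$P$ and abbreviate $A:=g``\J_P(x)$ and $B:=g``\J_P(y)$, so that $A\subseteq B\subseteq H$ because $\J_P(x)\subseteq\J_P(y)$. If $A=H$, then $B=H=A$ and the inclusion $G(A)\cap B\subseteq A$ is trivial. Otherwise $\card A\leq n-1$, and it suffices to prove the stronger statement $G(A)\cap H\subseteq A$, which, intersected with $B\subseteq H$, yields the desired inclusion. For this, let $h\in H\setminus A$; since $h\notin A\subseteq H$, the set $X:=H\setminus\set{h}$ is an $(n-1)$-element subset of~$H$ containing~$A$. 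Freeness of~$H$ gives $G(X)\cap H\subseteq X$, whence $h\notin G(X)$, while isotonicity of~$G$ together with $A\subseteq X$ gives $G(A)\subseteq G(X)$; therefore $h\notin G(A)$. As~$h$ ranges over $H\setminus A$, this proves $G(A)\cap H\subseteq A$.

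I expect the main obstacle to be the mismatch between the arbitrary finite arguments of~$G$ and the fixed arity $n-1$ at which Kuratowski's theorem operates: freeness of~$H$ controls only $(n-1)$-element subsets, whereas~\eqref{Eq:FreenuJPa2} concerns the possibly smaller sets $g``\J_P(x)$. This is resolved by isolating the exceptional case $A=H$, where freeness has no content but the conclusion is trivial, and, in every remaining case, enlarging~$A$ to the $(n-1)$-element set $H\setminus\set{h}$ for each forbidden $h\in H\setminus A$, letting isotonicity transport the freeness inclusion back down to~$A$.
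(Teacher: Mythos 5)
Your proof is correct and follows essentially the same route as the paper's: both reduce the statement to Kuratowski's Free Set Theorem applied at order $n-1$ on the set $\J(P)$ of \jirr\ elements. The only difference is one of packaging — the paper delegates to Proposition~\ref{P:TwoArrEquiv} and Lemma~\ref{L:SubPosetleadsto} via the embedding $x\mapsto\J_P(x)$ of~$P$ into $\Pow(\J(P))$, whereas you invoke Lemma~\ref{L:EquivKurat} directly and make explicit the step of enlarging $g``\J_P(x)$ to $(n-1)$-element subsets of the free set, which the paper leaves implicit inside its citation of Kuratowski's theorem.
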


\begin{proof}
The statement is trivial in case~$P$ is an antichain, so suppose that~$P$ is not an antichain. Set $n:=\card\J(P)$, let $\gl$ be an infinite cardinal, and set $\gk:=\gl^{+(n-1)}$. It follows from Kuratowski's Free Set Theorem~\cite{Kura51} that the relation $(\gk,{<}\gl)\leadsto(\Pow(n),\subseteq)$ holds (cf. Proposition~\ref{P:TwoArrEquiv}). As the assignment $x\mapsto\J_P(x)$ defines an embedding from~$P$ into $(\Pow(\J(P)),\subseteq)$, it follows from Lemma~\ref{L:SubPosetleadsto} that the relation $(\gk,{<}\gl)\leadsto P$ holds.
\end{proof}

\begin{proposition}\label{P:KuratTree}
Let~$T$ be a well-founded tree and let~$\gk$ be an infinite cardinal such that $\card T\leq\gk$ and $\card(T\dnw p)<\cf\gk$ for each $p\in T$. Then the relation $(\gk,{<}\gk)\leadsto T$ holds.
\end{proposition}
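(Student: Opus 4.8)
The plan is to argue directly from Definition~\ref{D:InfCombP}. Note first that we cannot invoke Lemma~\ref{L:EquivKurat} or the isotonicity reduction here: the hypothesis only gives $\card(T\dnw p)<\cf\gk$, so $T$ need not be lower finite. Thus let $F\colon\Pow(\gk)\to[\gk]^{<\gk}$ be arbitrary; I must build a one-to-one map $f\colon T\mono\gk$ satisfying~\eqref{Eq:FreenuJPa}. The first step is to replace~\eqref{Eq:FreenuJPa} by the pointwise condition
\[
f(q)\notin F(f``(T\dnw x))\,,\qquad\text{for all }x<q\text{ in }T\,.
\]
This suffices: given $x\leq y$ in~$T$ and $v\in F(f``(T\dnw x))\cap f``(T\dnw y)$, write $v=f(q)$ with $q\leq y$; as~$T$ is a tree, $x$ and~$q$ both lie in the chain $T\dnw y$ and are therefore comparable. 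If $q\leq x$ then $v=f(q)\in f``(T\dnw x)$, while the case $x<q$ is excluded by the displayed condition (applied to this $x$ and~$q$), using that~$f$ is one-to-one. Hence~\eqref{Eq:FreenuJPa} holds.

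Next I record the relevant smallness. For each~$x$ we have $\card(T\dnw x)<\cf\gk$, so $F(f``(T\dnw x))\in[\gk]^{<\gk}$, and for each~$q$ the forbidden set
\[
B_q:=\bigcup\famm{F(f``(T\dnw x))}{x<q}
\]
is a union of fewer than $\cf\gk$ members of $[\gk]^{<\gk}$, whence $\card B_q<\gk$ by the definition of~$\cf\gk$; moreover $B_q$ depends only on the values of~$f$ on elements strictly below~$q$. Since~$T$ is well-founded and each $T\ddnw p$ is a well-ordered chain, the rank function $\gr\colon T\to\mathbf{Ord}$ satisfies $\gr(p)=\mathrm{ot}(T\ddnw p)<\cf\gk$ for every~$p$ (an ordinal of cardinality $<\cf\gk$ is $<\cf\gk$). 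Set $R_\alpha:=\setm{p\in T}{\gr(p)=\alpha}$, so that $T=\bigcup\famm{R_\alpha}{\alpha<\cf\gk}$ and distinct members of a single~$R_\alpha$ are incomparable. As $\cf\gk\leq\gk$ yields $\card(\gk\times\cf\gk)=\gk$, fix also a partition $\gk=\bigcup\famm{C_\alpha}{\alpha<\cf\gk}$ into pairwise disjoint pools with $\card C_\alpha=\gk$. I will define~$f$ by recursion on $\alpha<\cf\gk$, sending $R_\alpha$ into~$C_\alpha$; when level~$\alpha$ is reached, every $x<p$ with $p\in R_\alpha$ has rank $<\alpha$, so $B_p$ is already determined and is independent of the choices made on~$R_\alpha$.

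The one genuine obstacle is to ensure the recursion never runs out of values, since a level~$R_\alpha$ may have cardinality~$\gk$ while each choice must dodge a forbidden set of size~$<\gk$. This is precisely what the disjoint pools, together with $R_\alpha$ being an antichain, overcome. Having no order constraints inside~$R_\alpha$, I enumerate it in the order-type $\card R_\alpha\leq\gk$ (the initial ordinal) and choose the $f$-values one at a time: at a stage with set $V\subseteq C_\alpha$ of values already used inside the current level we have $\card V<\gk$, so $C_\alpha\setminus(B_p\cup V)$ is nonempty (it removes fewer than~$\gk$ elements from a set of size~$\gk$), and I pick $f(p)$ there. Injectivity is then automatic: choices within a level are distinct by construction, and choices at different levels lie in disjoint pools. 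Finally $f(q)\notin B_q\supseteq F(f``(T\dnw x))$ for every~$q$ and every $x<q$ gives the pointwise condition, hence~\eqref{Eq:FreenuJPa}, completing the proof. The substantive work is the bookkeeping of this last paragraph; everything else is structural.
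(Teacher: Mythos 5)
Your proof is correct and follows essentially the same strategy as the paper's: stratify $T$ by the rank function, partition $\gk$ into $\gk$-sized pools indexed by rank, and choose $f$-values recursively so as to avoid a forbidden set of size $<\gk$ (your set $B_q\cup V$ is the paper's $X_q$, with the paper running a single transfinite recursion along a lexicographic well-order extending the tree order where you run an outer recursion on levels and an inner one within each antichain level). The reduction of~\eqref{Eq:FreenuJPa} to the pointwise condition via comparability in the chain $T\dnw y$ is likewise exactly the paper's concluding step.
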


\begin{proof}
As~$T$ is well-founded, there exists a unique map~$\gr$ from~$T$ to the ordinals (the rank map) such that $\gr(q)=\setm{\gr(p)}{p\in T\ddnw q}$ for each $q\in T$. The range~$\gq$ of the map~$\gr$ is an ordinal and $\card\gq\leq\gk$.
Fix a partition $\famm{I_\gh}{\gh<\gq}$ of~$\gk$ such that $\card I_\gh=\gk$ for each $\gh<\gq$. Fix also a one-to-one map $\gn\colon T\mono\gk$. We define a strict ordering~$\tr$ on~$T$ (lexicographical product) by
 \[
 p\tr q\ \Longleftrightarrow\ \bigl(\text{either }\gr(p)<\gr(q)\text{ or }
 (\gr(p)=\gr(q)\text{ and }\gn(p)<\gn(q))\bigr)\,.
 \]
Observe that~$\tr$ is a strict well-ordering of~$T$ and that it extends the original strict ordering~$<$. Let $F\colon\Pow(\gk)\to[\gk]^{<\gk}$. We define, by $\tr$-induction, a map $f\colon T\to\gk$, as follows. Let $q\in T$ and suppose that $f(p)$ has been defined and belongs to~$I_{\gr(p)}$, for each $p\in T$ such that $p\tr q$. It follows from the assumptions on~$T$ that the set
 \[
 X_q:=\setm{f(p)}{p\in T\,,\ \gr(p)=\gr(q)\,,\ \gn(p)<\gn(q)}\cup
 \bigcup\famm{F(f``(T\dnw p))}{p\in T\ddnw q}
 \]
(where $T\ddnw q$ is evaluated with respect to the original ordering of~$T$) has cardinality less than~$\gk$. Hence, $I_{\gr(q)}\setminus X_q$ is nonempty. We pick $f(q)\in I_{\gr(q)}\setminus X_q$.

We claim that~$f$ is one-to-one. Indeed let $p,q\in T$ distinct, we prove that $f(p)\neq f(q)$. We may assume that $p\tr q$. If $\gr(p)=\gr(q)$, then $\gn(p)<\gn(q)$, so $f(p)\in X_q$ and the conclusion follows. If $\gr(p)<\gr(q)$, then the conclusion follows from $f(p)\in I_{\gr(p)}$, $f(q)\in I_{\gr(q)}$, and $I_{\gr(p)}\cap I_{\gr(q)}=\es$.

Furthermore, for all $p<q$ in~$T$, $f(q)$ does not belong to~$X_q$, thus, \emph{a fortiori}, not to~$F(f``(T\dnw p))$. Therefore, $F(f``(T\dnw p))\cap f``(T\dnw q)\subseteq f``(T\dnw p)$ for all $p\leq q$ in~$T$.
\end{proof}

{}From Lemma~\ref{L:SubPosetleadsto} we deduce immediately the following.

\begin{proposition}\label{P:KurIndIncr}
If a poset~$P$ embeds into a finite poset~$Q$, then $\kur(P)\leq\kur(Q)$.
\end{proposition}

The proof of the following result is inspired by the proof of Kuratowski's Free Set Theorem, see \cite[Theorem~46.1]{EHMR}. It is closely related to \cite[Lemme~3.3.7]{GillTh}.

\begin{lemma}[Product Lemma]\label{L:ProdKurat}
Let $P$ and $Q$ be lower finite posets with zero and let~$\ga$, $\gb$, $\gc$ be infinite cardinals such that $\ga\leq\gb\leq\gc$ and $\card Q<\cf\ga$.
If $(\gb,{<}\ga)\leadsto P$ and $(\gc,{<}\gb^+)\leadsto Q$, then $(\gc,{<}\ga)\leadsto P\times Q$.
\end{lemma}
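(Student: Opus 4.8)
The plan is to verify the relation on join-irreducible elements by means of Lemma~\ref{L:EquivKurat}, so I would fix an isotone map $G\colon[\gc]^{<\go}\to[\gc]^{<\ga}$ and reduce the problem to constructing a one-to-one map $g\colon\J(P\times Q)\mono\gc$ satisfying~\eqref{Eq:FreenuJPa2}. Since $P$ and $Q$ have zero, $\J(P\times Q)$ is the disjoint union $(\J(P)\times\set{0_Q})\cup(\set{0_P}\times\J(Q))$, so building $g$ amounts to building one-to-one maps $g_P\colon\J(P)\mono\gc$ and $g_Q\colon\J(Q)\mono\gc$ with disjoint ranges. Writing $A_p:=g_P``\J_P(p)$ and $B_q:=g_Q``\J_Q(q)$, both finite by lower finiteness, one has $g``\J_{P\times Q}(p,q)=A_p\cup B_q$, and—because the ranges of $g_P$ and $g_Q$ are disjoint—the condition~\eqref{Eq:FreenuJPa2} for $P\times Q$ splits into the two requirements (i) $G(A_p\cup B_q)\cap A_{p'}\subseteq A_p$, for $p\leq p'$ in $P$ and all $q\in Q$, and (ii) $G(A_p\cup B_q)\cap B_{q'}\subseteq B_q$, for $q\leq q'$ in $Q$ and all $p\in P$.

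Next I would fix a partition $\gc=R\sqcup S$ with $\card R=\gc$ and $\card S=\gb$, reserving $S$ for the range of $g_P$ and $R$ for the range of $g_Q$; this is possible in all cases, including $\gb=\gc$. To obtain $g_Q$ I would apply $(\gc,{<}\gb^+)\leadsto Q$, forcing its range into $R$ (via a bijection $R\cong\gc$), to the isotone map $\hat G(Y):=\bigcup\famm{G(Y\cup Z)}{Z\in[S]^{<\go}}\cap R$. Since $\card[S]^{<\go}=\gb$ and each $G(Y\cup Z)$ has size ${<}\ga\leq\gb$, the set $\hat G(Y)$ has cardinality ${\leq}\gb$, so $\hat G$ does map into $[\gc]^{<\gb^+}$. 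The resulting freeness of $g_Q$ then yields, for every \emph{finite} $Z\subseteq S$ and all $q\leq q'$, the inclusion $G(Z\cup B_q)\cap B_{q'}\subseteq\hat G(B_q)\cap B_{q'}\subseteq B_q$; taking $Z=A_p$ gives~(ii).

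With $g_Q$ in hand I would run $(\gb,{<}\ga)\leadsto P$ inside $S\cong\gb$, applied to the isotone map $\bar G(X):=\bigcup\famm{G(X\cup B_q)}{q\in Q}\cap S$. Here the hypothesis $\card Q<\cf\ga$ is exactly what guarantees that this union of fewer than $\cf\ga$ sets, each of size ${<}\ga$, again has size ${<}\ga$, so that $\bar G$ maps into $[\gc]^{<\ga}$. The freeness of the resulting $g_P$ gives $\bar G(A_p)\cap A_{p'}\subseteq A_p$, whence $G(A_p\cup B_q)\cap A_{p'}\subseteq\bar G(A_p)\cap A_{p'}\subseteq A_p$, which is~(i). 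Combining (i) and (ii) establishes~\eqref{Eq:FreenuJPa2}, and Lemma~\ref{L:EquivKurat} finishes the proof.

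The main obstacle is an apparent circularity: the map $\bar G$ defining $g_P$ refers to the already-chosen values $B_q$, while the condition~(ii) that $g_Q$ must satisfy refers to the values $A_p$ produced only later. The resolution is to make the $Q$-step robust in advance, by absorbing \emph{all} finite subsets $Z\subseteq S$ into $\hat G$ rather than the specific, not-yet-known sets $A_p$. Lower finiteness is essential here, since it keeps every $A_p$, $B_q$, and $Z$ finite, so that $G$ can be evaluated on $A_p\cup B_q\cup Z$; and the accompanying bookkeeping—why $\hat G$ stays within $[\gc]^{<\gb^+}$ and $\bar G$ within $[\gc]^{<\ga}$—is precisely where the hypotheses $\ga\leq\gb$, the enlarged gap $\gb^+$, and $\card Q<\cf\ga$ are consumed.
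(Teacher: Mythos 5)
Your proposal is correct and follows essentially the same route as the paper's proof: the same partition of $\gc$ into a piece of size $\gb$ (for $P$) and a piece of size $\gc$ (for $Q$), the same two auxiliary isotone maps (the $Q$-side one absorbing all finite subsets of the $\gb$-sized piece, which uses $\ga\leq\gb$ to stay below $\gb^+$, and the $P$-side one absorbing the already-chosen sets $B_q$, which uses $\card Q<\cf\ga$), applied in the same order, with the same case split on $\J(P\times Q)=(\J(P)\times\set{0_Q})\cup(\set{0_P}\times\J(Q))$ at the end. The only differences are notational.
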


\begin{proof}
We use the equivalent form of the~$\leadsto$ relation provided by Lemma~\ref{L:EquivKurat}.
Let $F\colon\Pow(\gc)\to[\gc]^{<\ga}$. As $\gb\leq\gc$, there exists a partition $(U,V)$ of the set~$\gc$ such that $\card U=\gb$ and $\card V=\gc$. As $\ga\leq\gb$, we can define a map $H\colon\Pow(V)\to[V]^{<\gb^+}$ by the rule
 \[
 H(Y):=V\cap\bigcup\famm{F(X\cup Y)}{X\in[U]^{<\go}}\,,\quad\text{for each }
 Y\in\Pow(V)\,.
 \]
As $(\gc,{<}\gb^+)\leadsto Q$, there exists a one-to-one map $h\colon\J(Q)\mono V$ such that
 \begin{equation}\label{Eq:hfreeHQ}
 (\forall y\leq y'\text{ in }Q)\bigl(H(h``\J_Q(y))\cap h``\J_Q(y')\subseteq
 h``\J_Q(y)\bigr)\,.
 \end{equation}
As $\card Q<\cf\ga$, we can define a map $G\colon\Pow(U)\to[U]^{<\ga}$ by the rule
 \[
 G(X):=U\cap\bigcup\famm{F(X\cup h``\J_Q(y))}{y\in Q}\,,\quad\text{for each }
 X\in\Pow(U)\,.
 \]
As $(\gb,{<}\ga)\leadsto P$, there exists a one-to-one map $g\colon\J(P)\mono U$ such that
 \begin{equation}\label{Eq:gfreeGP}
 (\forall x\leq x'\text{ in }P)\bigl(G(g``\J_P(x))\cap g``\J_P(x')\subseteq
 g``\J_P(x)\bigr)\,.
 \end{equation}
As $\J(P\times Q)=(\J(P)\times\set{0_Q})\cup(\set{0_P}\times\J(Q))$ and~$\gc$ is the disjoint union of~$U$ and~$V$, we can define a one-to-one map $f\colon\J(P\times Q)\mono\gc$ by the rule
 \[
 \begin{cases}
 f(p,0_Q)&:=g(p)\,,\quad\text{for each }p\in\J(P)\,;\\
 f(0_P,q)&:=h(q)\,,\quad\text{for each }q\in\J(Q)\,. 
 \end{cases}
 \]
Let $(x,y)\leq(x',y')$ in $P\times Q$, we verify that the following statement holds:
 \begin{equation}\label{Eq:ffreePtimesQ}
 F(f``\J_{P\times Q}(x,y))\cap f``\J_{P\times Q}(x',y')\subseteq
 f``\J_{P\times Q}(x,y)\,. 
 \end{equation}
So let $(p,q)\in\J_{P\times Q}(x',y')$ such that $f(p,q)\in F(f``\J_{P\times Q}(x,y))$, that is, $f(p,q)\in F(g``\J_P(x)\cup h``\J_Q(y))$. We must prove that $(p,q)\leq(x,y)$. We separate cases.

\case{1} $q=0_Q$. As
 \[
  g(p)=f(p,q)\in F(g``\J_P(x)\cup h``\J_Q(y))\cap U\subseteq G(g``\J_P(x))
 \]
while $g(p)\in g``\J_P(x')$, it follows from~\eqref{Eq:gfreeGP} that $g(p)\in g``\J_P(x)$, that is, $p\leq x$.

\case{2} $p=0_P$. As
 \[
 h(q)=f(p,q)\in F(g``\J_P(x)\cup h``\J_Q(y))\cap V\subseteq H(h``\J_Q(y))
 \]
while $h(q)\in h``\J_Q(y')$, it follows from~\eqref{Eq:hfreeHQ} that $h(q)\in h``\J_Q(y)$, that is, $q\leq y$.
This completes the proof of \eqref{Eq:ffreePtimesQ}.
\end{proof}

\begin{proposition}\label{P:AddKur}
The inequality $\kur(P\times Q)\leq\kur(P)+\kur(Q)$ holds, for any finite posets $P$ and~$Q$ with zero.
\end{proposition}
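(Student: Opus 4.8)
The plan is to derive the statement from a single application of the Product Lemma (Lemma~\ref{L:ProdKurat}), after first clearing away the degenerate cases and then choosing the three cardinals so that the successor gaps telescope correctly.

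First I would dispose of the antichain cases, which are forced to be trivial because $P$ and $Q$ have a zero: if $0_P\le x$ and $x\neq 0_P$ then $0_P<x$, so any poset with zero and at least two elements has a strict comparability and hence is \emph{not} an antichain. Thus the only antichain with zero is the one-element poset. If, say, $P$ is an antichain, then $P$ is a singleton, $\kur(P)=0$, and $P\times Q\cong Q$, whence $\kur(P\times Q)=\kur(Q)=\kur(P)+\kur(Q)$; the case where $Q$ is an antichain is symmetric. So I may henceforth assume that neither factor is an antichain, i.e.\ $m:=\kur(P)\geq1$ and $n:=\kur(Q)\geq1$. Note that $P\times Q$ is then not an antichain either, since $(0_P,0_Q)<(p,q)$ for any witnesses $p>0_P$ and $q>0_Q$; consequently the Kuratowski index of $P\times Q$ is governed by the $\leadsto$ clause of Definition~\ref{D:KurInd}, and it suffices to show that $m+n$ is a Kuratowski index of $P\times Q$.

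The core step is to verify $(\gk^{+(m+n-1)},{<}\gk)\leadsto P\times Q$ for every infinite cardinal~$\gk$. Fix such a $\gk$ and set
 \[
 \ga:=\gk,\qquad \gb:=\gk^{+(m-1)},\qquad \gc:=\gk^{+(m+n-1)}\,.
 \]
Then $\ga\leq\gb\leq\gc$ (using $m\geq1$ and $n\geq1$), and $\card Q<\aleph_0\leq\cf\gk=\cf\ga$ since $Q$ is finite. Because $\kur(P)=m$, the defining property of the Kuratowski index gives $(\gb,{<}\ga)\leadsto P$. Because $\kur(Q)=n$, applying that defining property with base cardinal $\gk^{+m}$ yields $\bigl((\gk^{+m})^{+(n-1)},{<}\gk^{+m}\bigr)\leadsto Q$; since $\gb^+=\gk^{+m}$ and $(\gk^{+m})^{+(n-1)}=\gk^{+(m+n-1)}=\gc$, this is exactly $(\gc,{<}\gb^+)\leadsto Q$. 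All hypotheses of Lemma~\ref{L:ProdKurat} are thereby met, and it delivers $(\gc,{<}\ga)\leadsto P\times Q$, that is, $(\gk^{+(m+n-1)},{<}\gk)\leadsto P\times Q$. As $\gk$ was arbitrary and $m+n>0$, this shows $m+n$ is a Kuratowski index of $P\times Q$, so $\kur(P\times Q)\leq m+n$.

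I do not expect a genuine obstacle here: the whole argument hinges on arranging the cardinal bookkeeping so that the ``gap'' $\gb^+=\gk^{+m}$ produced after absorbing the first factor becomes precisely the base cardinal fed to the second factor, at which point the successors telescope to $\gk^{+(m+n-1)}$. The only points requiring care are the preliminary antichain reduction—needed so that the $n>0$ clause of Definition~\ref{D:KurInd} genuinely applies and the inequality is not broken by a degenerate index—and the cofinality hypothesis $\card Q<\cf\ga$ of the Product Lemma, which is immediate from the finiteness of~$Q$.
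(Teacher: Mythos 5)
Your proof is correct and follows essentially the same route as the paper: after discarding the degenerate (singleton/antichain) factors, a single application of the Product Lemma with $\ga=\gk$, $\gb=\gk^{+(m-1)}$, $\gc=\gk^{+(m+n-1)}$ (the paper's $\gl=\gk^{+m}$ being your $\gb^+$) telescopes the successors exactly as in the paper. Your more explicit justification of the antichain reduction and of the hypotheses $\ga\leq\gb\leq\gc$ and $\card Q<\cf\ga$ is fine and matches what the paper leaves implicit.
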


\begin{proof}
We may assume that~$P$ and~$Q$ are both nonzero. Set $m:=\kur(P)$ and $n:=\kur(Q)$. Let~$\gk$ be an infinite  cardinal and set $\gl:=\gk^{+m}$. As $(\gk^{+(m-1)},{<}\gk)\leadsto\nobreak P$ and $(\gl^{+(n-1)},{<}\gl)\leadsto Q$, it follows from the Product Lemma (Lemma~\ref{L:ProdKurat}) that $(\gl^{+(n-1)},{<}\gk)\leadsto P\times Q$. Now observe that $\gl^{+(n-1)}=\gk^{+(m+n-1)}$.
\end{proof}

Denote by $\dim(P)$ the \emph{order-dimension} of a poset~$P$, that is, the smallest number~$\gk$ of chains such that~$P$ embeds into a product of~$\gk$ chains. The order-dimension of a finite poset is finite. It follows immediately from Proposition~\ref{P:KuratTree} that the Kuratowski index of a nontrivial finite tree is~$1$. Hence, by applying Proposition~\ref{P:AddKur}, we obtain immediately the following upper bound for~$\kur(P)$.

\begin{proposition}\label{P:UppBd}
Le~$n$ be a positive integer and let~$P$ be a finite poset. If~$P$ embeds, as a poset, into a product of~$n$ trees, then $\kur(P)\leq n$. In particular, $\kur(P)\leq\dim(P)$.
\end{proposition}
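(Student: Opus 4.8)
The plan is to assemble the statement from three earlier results: the computation that a finite tree has Kuratowski index at most~$1$ (via Proposition~\ref{P:KuratTree}), the subadditivity of~$\kur$ on products (Proposition~\ref{P:AddKur}), and its monotonicity under embeddings (Proposition~\ref{P:KurIndIncr}).

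First I would check that any finite tree~$T$ satisfies $\kur(T)\leq1$. Being finite, $T$ is well-founded, and for every infinite cardinal~$\gk$ the side-conditions of Proposition~\ref{P:KuratTree} are automatically satisfied: $\card T$ is finite, so $\card T\leq\gk$, and $\card(T\dnw p)$ is finite, hence $<\aleph_0\leq\cf\gk$, for each $p\in T$. Thus $(\gk,{<}\gk)\leadsto T$ holds for every infinite~$\gk$, so that~$1$ is a Kuratowski index of~$T$; as a one-point tree is an antichain with index~$0$, in all cases $\kur(T)\leq1$.

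Next, given an embedding of~$P$ into a product $T_1\times\dots\times T_n$ of trees, I would first reduce to \emph{finite} trees, so that Proposition~\ref{P:AddKur} becomes applicable. For each~$i$, the projection~$A_i$ of the image of~$P$ onto~$T_i$ is a finite sub-poset of the tree~$T_i$ in which every principal lower subset~$A_i\dnw a$, being contained in the chain~$T_i\dnw a$, is a chain; adjoining a new least element therefore turns~$A_i$ into a finite tree~$S_i$ into which~$A_i$ order-embeds. Composing the given embedding with these inclusions embeds~$P$ into $S_1\times\dots\times S_n$. Each~$S_i$ has a least element, so all the iterated products carry a zero and Proposition~\ref{P:AddKur} yields, by induction on~$n$,
 \[
 \kur(S_1\times\dots\times S_n)\leq\sum_{i=1}^n\kur(S_i)\leq n\,.
 \]
Since~$P$ embeds into this finite product of finite trees, Proposition~\ref{P:KurIndIncr} gives $\kur(P)\leq n$. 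For the final assertion, a finite chain is in particular a finite tree, and by definition of the order-dimension~$P$ embeds into a product of $\dim(P)$ chains, which may be taken finite since~$P$ is finite; applying the main statement with $n=\dim(P)$ then gives $\kur(P)\leq\dim(P)$.

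The proof is essentially bookkeeping, so I do not anticipate a genuine obstacle. The only points requiring attention are that trees, and hence their finite products, possess a least element, as required by Proposition~\ref{P:AddKur}, and the reduction of possibly infinite trees to finite ones; this last reduction is the single place where a short argument, rather than a direct citation, is needed.
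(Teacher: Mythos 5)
Your proof is correct and follows essentially the same route as the paper, which likewise derives the bound from Proposition~\ref{P:KuratTree} (finite nontrivial trees have Kuratowski index~$1$), the subadditivity of Proposition~\ref{P:AddKur}, and the monotonicity of Proposition~\ref{P:KurIndIncr}. Your explicit reduction of possibly infinite tree factors to finite trees with zero is a detail the paper leaves implicit, and it is handled correctly.
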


As $\dim(P)\leq\card\J(P)$ (for the assignment $x\mapsto\J_P(x)$ defines a \memb\ from~$P$ into~$\Pow(\J(P))$), this bound is sharper than the previously observed bound $\card\J(P)$ (cf. Proposition~\ref{P:Existskur(P)}). In fact, it dates back to Baker~\cite{Baker} that $\dim(P)\leq\wdt\J(P)$, where~$\wdt$ denotes the \emph{width} function (the width of a poset is the supremum of the cardinalities of all antichains of that poset). Hence,
 \begin{equation}\label{Eq:VarIneqkurP}
 \kur(P)\leq\dim(P)\leq\wdt\J(P)\leq\card\J(P)\,.
 \end{equation}
While Proposition~\ref{P:UppBd} provides an upper bound for the Kuratowski index of a finite poset, our next result, Proposition~\ref{P:kurgewbr}, will provide a lower bound.
The classical definition of breadth, see Baker~\cite{Baker} or Ditor \cite[Section~4]{Dito84}, runs as follows. Let~$n$ be a positive integer. A \js\ $P$ has \emph{breadth at most~$n$} if for every nonempty finite subset~$X$ of~$P$, there exists a nonempty $Y\subseteq X$ with at most~$n$ elements such that $\bigvee X=\bigvee Y$. This is a particular case of the following definition of breadth, valid for every poset, and, in addition, \emph{self-dual}: we say that a poset~$P$ has breadth at most~$n$ if for all $x_i$, $y_i$ \pup{$0\leq i\leq n$} in $P$, if $x_i\leq y_j$ for all $i\neq j$ in $\set{0,1,\dots,n}$, then there exists $i\in\set{0,1,\dots,n}$ such that $x_i\leq y_i$. We denote by $\brd(P)$ the least non-negative integer~$n$, if it exists, such that~$P$ has breadth at most~$n$, and we call it \emph{\textbf{the} breadth of~$P$}.

\begin{proposition}\label{P:kurgewbr}
Let $P$ be a finite poset with zero such that $P\dnw a$ is a \js\ for each $a\in P$. Then the inequality $\max\setm{\brd(P\dnw a)}{a\in P}\leq\kur(P)$ holds.
\end{proposition}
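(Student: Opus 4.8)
The plan is to reduce to a \js{} and then read the breadth hypothesis as a failure of Kuratowski's Free Set Theorem. First I would fix $a\in P$, put $Q:=P\dnw a$, and note that $Q$ is a lower subset of the finite poset~$P$, hence embeds into~$P$; so Proposition~\ref{P:KurIndIncr} gives $\kur(Q)\le\kur(P)$, and it suffices to prove $\brd(Q)\le\kur(Q)$ for an arbitrary finite \js{} $Q$ with zero. Write $r:=\brd(Q)$. The cases $r\le1$ are immediate from Definition~\ref{D:KurInd} (if $r=0$ then $\card Q=1$ is an antichain; if $r=1$ then $\card Q\ge2$, so $Q$ is not an antichain and $\kur(Q)\ge1$), so assume $r\ge2$. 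Since $Q$ does not have breadth at most $r-1$, the \js{} form of the definition of breadth supplies an \emph{irredundant} family $a_0,\dots,a_{r-1}$ in~$Q$: setting $y_k:=\bigvee_{i\ne k}a_i$ and $c:=\bigvee_{i<r}a_i$, one has $a_k\le y_j$ for all $j\ne k$, while $a_k\not\le y_k$, and the $a_k$ are pairwise distinct.

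The core of the plan is the following claim: $\kur(Q)\ge r$ follows once, for an arbitrary infinite cardinal~$\gk$ and $\mu:=\gk^{+(r-2)}$, we exhibit an isotone map $F\colon[\mu]^{<\go}\to[\mu]^{<\gk}$ whose restriction to $[\mu]^{r-1}$ has no $r$-element free set; equivalently, property~(*): for any $r$ distinct elements $b_0,\dots,b_{r-1}$ of~$\mu$ there is some $k<r$ with $b_k\in F(\setm{b_j}{j\ne k})$. The point is that (*) read on $(r-1)$-element sets is \emph{exactly} the statement that the relation $(\mu,r-1,\gk)\to r$ fails. Since $\mu=\gk^{+(r-2)}<\gk^{+(r-1)}$, Kuratowski's Free Set Theorem~\cite{Kura51} guarantees this failure, so there is a set mapping $F_0\colon[\mu]^{r-1}\to[\mu]^{<\gk}$ with no $r$-element free set. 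I would then extend it to the isotone map $F(X):=\bigcup\setm{F_0(S)}{S\in[X]^{r-1}}$ (and $F(X):=\es$ when $\card X<r-1$), whose values remain in $[\mu]^{<\gk}$ because $\gk$ is infinite and $[X]^{r-1}$ is finite, and which agrees with $F_0$ on $(r-1)$-element sets; hence $F$ satisfies~(*).

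To see that such an $F$ refutes $(\mu,{<}\gk)\leadsto Q$, let $f\colon Q\mono\mu$ be any one-to-one map (Definition~\ref{D:InfCombP}). The points $b_i:=f(a_i)$ are distinct, so (*) provides $k$ with $f(a_k)\in F(\setm{b_j}{j\ne k})$. As $a_j\le y_k$ for every $j\ne k$, we have $\setm{b_j}{j\ne k}\subseteq f``(Q\dnw y_k)$, whence $f(a_k)\in F(f``(Q\dnw y_k))$ by isotonicity. Since $a_k\le c$ but $a_k\not\le y_k$, the element $f(a_k)$ lies in $f``(Q\dnw c)$ but not in $f``(Q\dnw y_k)$; taking $x:=y_k\le y:=c$, this contradicts~\eqref{Eq:FreenuJPa}. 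Thus $(\mu,{<}\gk)\leadsto Q$ fails for every infinite~$\gk$, so $r-1$ is not a Kuratowski index of~$Q$; as the set of Kuratowski indices is upward closed (an easy consequence of the monotonicity of $\leadsto$ in its first cardinal parameter), $\kur(Q)\ge r$, and the Proposition follows by taking the maximum over $a\in P$.

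I expect the genuinely delicate step to be the recognition in the second paragraph that the breadth witness collapses the freeness requirement \eqref{Eq:FreenuJPa} into the single condition~(*), so that the entire lower bound is handed off to the \emph{failure} direction of Kuratowski's theorem with the precise arithmetic $\mu=\gk^{+(r-2)}<\gk^{+(r-1)}$. Everything else — extracting an irredundant family of the exact size $r$ from the failure of breadth $\le r-1$, verifying the comparabilities $a_k\le y_j$ and $a_k\not\le y_k$, and the routine isotone extension of $F_0$ — is bookkeeping that I would keep brief.
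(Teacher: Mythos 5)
Your argument is correct and rests on exactly the two ingredients of the paper's proof: the irredundant family $a_0,\dots,a_{r-1}$ extracted from the breadth witness, and the negative direction of Kuratowski's Free Set Theorem applied at $\gk^{+(r-2)}$. The paper packages this more economically by noting that $X\mapsto\bigvee_{i\in X}a_i$ is an order-embedding of $\Pow(r)$ into $P$, so that Proposition~\ref{P:KurIndIncr} together with $\kur(\Pow(r))=r$ (via Proposition~\ref{P:TwoArrEquiv}) finishes the proof; your third paragraph is in effect an inlined verification of those two citations.
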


\begin{proof}
We may assume that $P\neq\set{0}$.
Hence $n:=\max\setm{\brd(P\dnw a)}{a\in P}$ is a positive integer. Pick $a\in P$ such that $n=\brd(P\dnw a)$. There are $p_0,\dots,p_{n-1}\in P\dnw a$ such that $p_i\nleq\bigvee\famm{p_j}{j\neq i}$ for each $i<n$, where the join $\bigvee\famm{p_j}{j\neq i}$ is evaluated in~$P\dnw a$. The assignment $X\mapsto\bigvee\famm{p_i}{i\in X}$ defines a join-embedding from~$\Pow(n)$ into~$P$, thus, by Proposition~\ref{P:KurIndIncr}, $\kur(P)\geq\kur(\Pow(n))=n$ (the latter equality is part of the content of Kuratowski's Free Set Theorem, cf. Proposition~\ref{P:TwoArrEquiv}).
\end{proof}

As an immediate application of Propositions~\ref{P:UppBd} and~\ref{P:kurgewbr}, we obtain the following.

\begin{corollary}\label{C:kur(prdntr)}
Let $n$ be a positive integer and let~$P$ be a product of~$n$ nontrivial finite trees. Then $\kur(P)=n$.
\end{corollary}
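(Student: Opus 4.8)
The plan is to prove the two inequalities $\kur(P)\le n$ and $\kur(P)\ge n$ separately, drawing the upper bound from Proposition~\ref{P:UppBd} and the lower bound from Proposition~\ref{P:kurgewbr}. Write $P=T_1\times\cdots\times T_n$, where each $T_i$ is a nontrivial finite tree. The upper bound is immediate: $P$ is itself a product of $n$ trees, hence embeds (via the identity map) into a product of $n$ trees, so Proposition~\ref{P:UppBd} gives $\kur(P)\le n$.

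For the lower bound I would first check that $P$ meets the hypotheses of Proposition~\ref{P:kurgewbr}. Since each $T_i$ has a least element, $P$ has a zero; and for every $a=(a_1,\dots,a_n)\in P$ one has $P\dnw a=\prod_{i<n}(T_i\dnw a_i)$, where each factor $T_i\dnw a_i$ is a \emph{chain} because $T_i$ is a tree. Thus $P\dnw a$ is a product of finitely many chains, hence a lattice and in particular a \js. Proposition~\ref{P:kurgewbr} then yields $\max\setm{\brd(P\dnw a)}{a\in P}\le\kur(P)$, so it suffices to exhibit a single $a\in P$ with $\brd(P\dnw a)\ge n$.

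Here I would use nontriviality: for each $i<n$ pick $a_i>0_{T_i}$ in $T_i$ and set $a:=(a_1,\dots,a_n)$, so that $P\dnw a$ is a product of $n$ chains, each with at least two elements. For $i<n$ let $q_i\in P\dnw a$ be the element whose $i$-th coordinate is $a_i$ and whose remaining coordinates are $0$. A coordinatewise comparison shows $q_i\nleq\bigvee\famm{q_j}{j\neq i}$ for each $i$, since in coordinate~$i$ the left-hand side equals $a_i>0_{T_i}$ while the right-hand side equals $0_{T_i}$. Consequently no proper subfamily of $\set{q_0,\dots,q_{n-1}}$ has the same join, which forces $\brd(P\dnw a)\ge n$. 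Combining this with the upper bound gives $\kur(P)=n$. The one point demanding care is the verification of the hypotheses of Proposition~\ref{P:kurgewbr}---namely that every principal ideal $P\dnw a$ is a \js---but this becomes transparent once $P\dnw a$ is recognized as a product of chains, after which the breadth estimate is a routine coordinatewise computation.
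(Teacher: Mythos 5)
Your proof is correct and follows exactly the route the paper intends (the paper presents this corollary as an immediate consequence of Propositions~\ref{P:UppBd} and~\ref{P:kurgewbr} without writing out the details): the upper bound from the product-of-trees embedding, the lower bound by checking that each $P\dnw a$ is a product of chains and exhibiting the $n$ join-irredundant elements $q_0,\dots,q_{n-1}$. No gaps.
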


\section{Kuratowski indexes of truncated cubes}\label{S:Trunc}

In this section, we shall give estimates of Kuratowski indexes of \emph{truncated cubes}, with applications to finding large free sets for set mappings of order greater than one.

\begin{notation}\label{Not:B(k,m)}
For integers~$r$, $k$, $r_0\leq\cdots\leq r_{k-1}$, and~$m$ such that $1\leq r\leq m$ and $1\leq r_j\leq m$ for each $j<k$, we define the \emph{truncated $m$-dimensional cubes}
 \begin{align*}
 \rB_m({\les}r)&:=\setm{X\in\Pow(m)}{\text{either }\card X\leq r\text{ or }X=m}\,,\\
 \rB_m(r_0,\dots,r_{k-1})&:=
 \setm{X\in\Pow(m)}{\card X\in\set{r_0,\dots,r_{k-1}}}\,.
 \end{align*}
endowed with containment.
\end{notation}

Diagrams indexed by~$\rB_m({\les}2)$, for $m>2$, are widely used in Gillibert~\cite{Gill2}.

As $\rB_m({\les}m)=\rB_m({\les}m-1)$ for each positive integer~$m$, we shall assume that $r<m$ whenever we consider the poset~$\rB_m({\les}r)$. The following result is related to \cite[Corollaire~3.3.3]{GillTh}.

\begin{proposition}\label{P:FreeB(k,m)}
Let $r$ and $m$ be integers with $1\leq r<m$ and let $\gk$ and $\gl$ be infinite cardinals. Then the following statements are equivalent:
\begin{enumerate}
\item $(\gk,{<}\gl)\leadsto\rB_m({\les}r)$;

\item for every $F\colon[\gk]^{{\les}r}\to[\gk]^{<\gl}$ there exists $H\in[\gk]^m$ such that $F(X)\cap H\subseteq X$ for each $X\in[H]^{{\les}r}$;

\item $(\gk,r,\gl)\to m$ \pup{cf. Section~\textup{\ref{S:Intro}}}.
\end{enumerate}
\end{proposition}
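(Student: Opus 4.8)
The plan is to prove the two equivalences (i)$\Leftrightarrow$(ii) and (ii)$\Leftrightarrow$(iii) separately, routing the first one through the \jirr\ description of the $\leadsto$ relation given by Lemma~\ref{L:EquivKurat}. The first step is to record that, since $1\le r<m$, the \jirr\ elements of $\rB_m({\les}r)$ are exactly the singletons: the empty set is an empty join, every subset of size between~$2$ and~$r$ is the join of the singletons it contains, and the top element~$m$ is the join of all~$m$ singletons (none of which equals~$m$, as $m\ge2$). Hence $\J(\rB_m({\les}r))=\setm{\set{i}}{i<m}$ is an $m$-element antichain and $\J_P(x)=\setm{\set{i}}{i\in x}$ for each~$x$. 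I would then identify a one-to-one map $g\colon\J(\rB_m({\les}r))\mono\gk$ with the injection $\bar g\colon m\mono\gk$, $i\mapsto g(\set{i})$, and put $H:=\bar g``m$, so that $g``\J_P(x)=\bar g``x$ for every~$x$ and $\card H=m$.

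With this dictionary, applying Lemma~\ref{L:EquivKurat} turns statement~(i) into a condition quantified over pairs $x\le y$ in $\rB_m({\les}r)$. The key observation is that these pairs are of only three kinds---both of size $\le r$, or $x$ of size $\le r$ and $y=m$, or $x=y=m$---and that, because $\bar g``y\subseteq H$ always holds, the constraint $G(\bar g``x)\cap\bar g``y\subseteq\bar g``x$ for an arbitrary~$y$ is implied by its instance $y=m$ (the case $x=y=m$ being vacuous). Thus the whole family collapses, and statement~(i) becomes equivalent to the following reformulation, which I shall call~(i$'$): for every isotone $G\colon[\gk]^{<\go}\to[\gk]^{<\gl}$ there is $H\in[\gk]^m$ with $G(X)\cap H\subseteq X$ for all $X\in[H]^{{\les}r}$.

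Matching (i$'$) with~(ii) is then routine, using the reduction to isotone maps noted after Definition~\ref{D:InfCombP}. For (i$'$)$\Rightarrow$(ii) I would extend $F\colon[\gk]^{{\les}r}\to[\gk]^{<\gl}$ to the isotone map $G(Z):=\bigcup\famm{F(Y)}{Y\subseteq Z,\ \card Y\le r}$ (finite unions keep it $\gl$-small, as $\gl$ is infinite), for which any free~$H$ works for~$F$ since $F(X)\subseteq G(X)$ when $\card X\le r$; and (ii)$\Rightarrow$(i$'$) follows at once by restricting~$G$ to $[\gk]^{{\les}r}$.

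It remains to prove (ii)$\Leftrightarrow$(iii). The implication (ii)$\Rightarrow$(iii) is immediate: extend $F\colon[\gk]^r\to[\gk]^{<\gl}$ to $[\gk]^{{\les}r}$ by setting it to~$\es$ on sets of size $<r$, apply~(ii), and read off freeness on the $r$-element subsets of~$H$. The delicate direction, which I expect to be the main obstacle, is (iii)$\Rightarrow$(ii), since there $F$ is controlled only on sets of size \emph{exactly}~$r$, yet one must produce a set free at all sizes $\le r$. I would define $F'\colon[\gk]^r\to[\gk]^{<\gl}$ by $F'(X):=\bigcup\famm{F(Y)}{Y\subseteq X}$ and let $H\in[\gk]^m$ be free for~$F'$. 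Suppose some $Y\in[H]^{{\les}r}$ and some $t\in(F(Y)\cap H)\setminus Y$ existed. Here the hypothesis $r<m$ is decisive: since $\card(H\setminus(Y\cup\set{t}))=m-\card Y-1\ge r-\card Y$, I can enlarge~$Y$ to some $X\in[H]^r$ with $Y\subseteq X$ and $t\notin X$. Then $t\in F(Y)\subseteq F'(X)$ and $t\in H$, so freeness gives $t\in F'(X)\cap H\subseteq X$, contradicting $t\notin X$. Hence $F(Y)\cap H\subseteq Y$ for every $Y\in[H]^{{\les}r}$, which is~(ii). The strict inequality $r<m$ is exactly what makes the avoiding set~$X$ available, so it is used essentially in this last step.
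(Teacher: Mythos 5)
Your proposal is correct and follows essentially the same route as the paper: the equivalence (i)$\Leftrightarrow$(ii) via Lemma~\ref{L:EquivKurat} and the identification of the \jirr\ elements with singletons is exactly the ``easy argument'' the paper alludes to, and your (iii)$\Rightarrow$(ii) step---enlarging $Y$ to an $r$-element subset of $H$ avoiding a putative bad point $t$, which is where $r<m$ enters---is the same device the paper uses (phrased there as writing $X$ as an intersection of $r$-element subsets of $H$ and invoking isotonicity). No gaps.
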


\begin{proof}
An easy argument, similar to the one used in the proof of Proposition~\ref{P:TwoArrEquiv}, yields the equivalence of~(i) and~(ii). Furthermore, as every map from~$[\gk]^r$ to~$[\gk]^{<\gl}$ trivially extends to a map from~$[\gk]^{{\les}r}$ to~$[\gk]^{<\gl}$, (ii) implies~(iii).

Finally assume that~(iii) holds, and let $F\colon[\gk]^{{\les}r}\to[\gk]^{<\gl}$. We must find an~$m$-element free set, with respect to~$F$, of~$\gk$. We may assume that~$F$ is isotone. By applying~(ii) to the restriction~$F'$ of~$F$ to~$[\gk]^r$, we obtain an~$m$-element subset~$H$ of~$\gk$ which is free with respect to~$F'$. Let $X\in[H]^{{\les}r}$. For each $\gx\in H\setminus X$, from $X\subseteq H\setminus\set{\gx}$ and $\card X\leq r\leq\card(H\setminus\set{\gx})$ it follows that there exists $Y\in[H]^r$ such that $X\subseteq Y\subseteq H\setminus\set{\gx}$. By applying this to all elements $\gx\in H\setminus X$, we obtain that we can write $X=\bigcap\famm{X_i}{i<s}$, for a positive integer~$s$ and subsets $X_0$, \dots, $X_{s-1}$ of~$H$. As $F(X_i)\cap H\subseteq X_i$ for each~$i<s$ and as~$F$ is isotone, we obtain that $F(X)\cap H\subseteq X$. Therefore, $H$ is free with respect to~$F$, and so~(ii) holds.
\end{proof}

As~$\rB_m({\les}r)$ is a finite lattice with breadth~$r+1$, it follows from Proposition~\ref{P:kurgewbr} that $\kur\rB_m({\les}r)\geq r+1$, for all integers~$r$ and~$m>r$ such that $1\leq r<m$. Furthermore, as~$\rB_m({\les}r)$ has exactly~$m$ \jirr\ elements (namely the singletons), it follows from Proposition~\ref{P:Existskur(P)} that $\kur\rB_m({\les}r)\leq m$.

On the other hand, according to the results of L\'az\'ar, Hajnal-M\'at\'e, and Hajnal cited in Section~\ref{S:Intro}, it follows from Proposition~\ref{P:FreeB(k,m)} that the relation\linebreak $(\gl^{+r},{<}\gl)\leadsto\rB_m({\les}r)$ holds for all integers~$r\in\set{1,2,3}$ and~$m>r$. In particular, $\kur\rB_m({\les}r)\leq r+1$, and thus, as the converse inequality holds, $\kur\rB_m({\les}r)=r+1$, whenever $r\in\set{1,2,3}$ and $r<m<\go$. Arguing as above, in the presence of~$\mathsf{GCH}$ we obtain from the relation $(\gl^{+r},r,\gl)\to\gl^+$ (cf. \cite[Theorem~45.5]{EHMR}) that $\kur\rB_m({\les}r)=r+1$---now for all integers~$r$ and~$m$ such that $1\leq r<m$.

Without assuming~$\mathsf{GCH}$, Komj\'ath and Shelah's result (cf. Section~\ref{S:Intro}) yields that $\kur\rB_{t_4}({\les}4)$ may be larger than or equal to~$6$. In particular, $\kur\rB_{t_4}({\les}4)=5$ in any set-theoretical universe satisfying~$\mathsf{GCH}$, while $\kur\rB_{t_4}({\les}4)\geq6$ in some generic extension. And therefore, the \emph{Kuratowski index function is not absolute} (in the set-theoretical sense).

We sum up in the following proposition some of the results above.

\begin{proposition}\label{P:MoreKurB(r,m)}\hfill
\begin{enumerate}
\item $r+1\leq\kur\rB_m({\les}r)\leq m$ for all integers~$r$ and~$m$ such that $1\leq r<m$.

\item $\kur\rB_m({\les}r)=r+1$ for all integers~$r$ and~$m$ such that $1\leq r<m$ and $r\in\set{1,2,3}$.

\item Assume that~$\mathsf{GCH}$ holds. Then $\kur\rB_m({\les}r)=r+1$ for all integers~$r$ and~$m$ such that $1\leq r<m$.

\item There exists a model of $\mathsf{ZFC}$ where $\kur\rB_{t_4}({\les}4)\geq6$.
\end{enumerate}
\end{proposition}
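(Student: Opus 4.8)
The plan is to read off all four items from the bounds and equivalences already assembled in this section, so the work is almost entirely bookkeeping; throughout write $P:=\rB_m({\les}r)$ and record its structural features once. The poset $P$ has a zero (namely $\es$), its set of \jirr\ elements is exactly the set of $m$ singletons, so $\card\J(P)=m$, and for each $a\in P$ the principal ideal $P\dnw a$ is a \js: if $\card a\le r$ then $P\dnw a=\Pow(a)$, while $P\dnw m=P$ is itself a lattice, since $X\vee Y=X\cup Y$ when $\card(X\cup Y)\le r$ and $X\vee Y=m$ otherwise. Finally $\brd(P)=r+1$, realized by the singletons $\set{0},\dots,\set{r}$, whose join in $P$ is $m$ while the join of any $r$ of them has size $\le r$ and so stays below $m$.

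For~(i), the lower bound comes from Proposition~\ref{P:kurgewbr} applied to $P$ (whose hypotheses hold by the previous paragraph): taking $a:=m$ gives $r+1=\brd(P)=\brd(P\dnw m)\le\kur(P)$. Concretely, the assignment $S\mapsto S$ for $\card S\le r$ together with $\set{0,\dots,r}\mapsto m$ is a join-embedding of $\Pow(r+1)$ into $P$, so Proposition~\ref{P:KurIndIncr} already gives $\kur(P)\ge\kur(\Pow(r+1))=r+1$. The upper bound is Proposition~\ref{P:Existskur(P)}: $\kur(P)\le\card\J(P)=m$.

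For~(ii) and~(iii) I reduce the target equality to the upper bound $\kur(P)\le r+1$, the matching lower bound being~(i). By Definition~\ref{D:KurInd}, $\kur(P)\le r+1$ follows once the relation $(\gl^{+r},{<}\gl)\leadsto P$ is known for every infinite cardinal~$\gl$, i.e., by the equivalence of~(i) and~(iii) in Proposition~\ref{P:FreeB(k,m)}, once $(\gl^{+r},r,\gl)\to m$ is known. For~(ii) this relation is supplied by the theorems recalled in Section~\ref{S:Intro}: L\'az\'ar for $r=1$, Hajnal--M\'at\'e for $r=2$, and Hajnal for $r=3$ (all for $m>r$). For~(iii), under~$\mathsf{GCH}$ the relation $(\gl^{+r},r,\gl)\to\gl^+$ of \cite[Theorem~45.5]{EHMR} holds for $r\ge2$, and since $m<\gl^+$ and any subset of a free set is free, it yields $(\gl^{+r},r,\gl)\to m$; the case $r=1$ is again L\'az\'ar, needing no hypothesis.

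For~(iv) the one extra ingredient I would isolate is the \emph{monotonicity of the arrow in its first entry}: if $\gk\le\gk'$ and $(\gk,r,\gl)\to\gr$, then $(\gk',r,\gl)\to\gr$. This is immediate---given $F\colon[\gk']^r\to[\gk']^{<\gl}$, apply the hypothesis to $G(X):=F(X)\cap\gk$ to obtain $H\in[\gk]^\gr$ with $G(X)\cap H\subseteq X$, whence $F(X)\cap H=G(X)\cap H\subseteq X$ because $H\subseteq\gk$---but it is exactly what makes the set of Kuratowski indices of $P$ an up-set. Consequently $\kur\rB_{t_4}({\les}4)\le5$ would force $5$ itself to be a Kuratowski index, hence $(\aleph_4,{<}\aleph_0)\leadsto\rB_{t_4}({\les}4)$ (instantiate Definition~\ref{D:KurInd} at $\gk=\aleph_0$, noting $\aleph_0^{+4}=\aleph_4$), hence $(\aleph_4,4,\aleph_0)\to t_4$ by Proposition~\ref{P:FreeB(k,m)} (valid since $4<t_4$). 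Komj\'ath--Shelah~\cite{KoSh00} produce a generic extension in which $(\aleph_4,4,\aleph_0)\not\to t_4$, so in that model $\kur\rB_{t_4}({\les}4)\ge6$. The only place demanding care---and thus the main obstacle---is this last item: keeping the index arithmetic straight ($\gk^{+(n-1)}$ read at $\gk=\aleph_0$ is $\aleph_{n-1}$) and noticing that the monotonicity observation is precisely what upgrades ``$5$ is not an index'' to ``$\kur\ge6$''.
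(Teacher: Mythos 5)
Your proof is correct and follows essentially the same route as the paper, which establishes all four items in the discussion preceding the proposition: the lower bound via Proposition~\ref{P:kurgewbr} and breadth $r+1$, the upper bound via Proposition~\ref{P:Existskur(P)}, items (ii)--(iii) via Proposition~\ref{P:FreeB(k,m)} together with the L\'az\'ar/Hajnal--M\'at\'e/Hajnal and \cite[Theorem~45.5]{EHMR} results, and (iv) via Komj\'ath--Shelah. Your explicit monotonicity-in-$\gk$ observation for item (iv) is a detail the paper leaves implicit, and you state and use it correctly.
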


While the integer~$t_4$ of Proposition~\ref{P:MoreKurB(r,m)} is quite large, smaller values yield new Kuratowski indexes that were not available by using earlier methods. The following result is due to the first author \cite[Lemme~3.3.12]{GillTh}. The proof of the order-dimension exact estimate $\dim\rB_{n+2}({\les}n)=n+1$ is contained in~\cite{Dush50}, however \cite[Lemme~3.3.12]{GillTh} provides an easy direct proof in that case. Of course it is obvious that the breadth of~$\rB_{n+2}({\les}n)$ is $n+1$, and so the conclusion of Lemma~\ref{L:OrdDImBrm} follows from Propositions~\ref{P:UppBd} and~\ref{P:kurgewbr}.

\begin{lemma}\label{L:OrdDImBrm}
$\kur\rB_{n+2}({\les}n)=\dim\rB_{n+2}({\les}n)=\brd\rB_{n+2}({\les}n)=n+1$, for every positive integer~$n$.
\end{lemma}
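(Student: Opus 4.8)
The plan is to prove the three invariants equal to $n+1$ by a single squeeze, using the two general bounds already established. Write $P:=\rB_{n+2}({\les}n)$ and $m:=n+2$. First I would record that $P$ is a finite lattice with least element $\es$ and that every principal ideal $P\dnw a$ is a join-semilattice: for $a\neq m$ it equals the Boolean lattice $\Pow(a)$, while $P\dnw m=P$. Thus Proposition~\ref{P:kurgewbr} applies and, taking the term $a:=m$, yields $\brd(P)\leq\kur(P)$; and Proposition~\ref{P:UppBd} gives $\kur(P)\leq\dim(P)$. Consequently it suffices to establish the two end inequalities $\brd(P)\geq n+1$ and $\dim(P)\leq n+1$, for then $n+1\leq\brd(P)\leq\kur(P)\leq\dim(P)\leq n+1$ forces all four values to coincide with $n+1$.

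Next I would compute the breadth. For the lower bound, consider the $n+1$ atoms $\set{0},\set{1},\dots,\set{n}$ of $P$. Their join is the top element $m$, because the only member of $P$ containing $\set{0,1,\dots,n}$ (a set of size $n+1>n$) is $m$ itself; yet the join of any $n$ of them is a set of size $n$, hence lies strictly below $m$. This witnesses $\brd(P)>n$, so $\brd(P)\geq n+1$. For completeness, the reverse inequality follows by a greedy covering argument: given a finite $X\subseteq P$ with join $j$, if $j\neq m$ then $j=\bigcup X$ has size $\leq n$ and is covered by at most $n$ members of $X$, while if $j=m$ then $\bigcup X$ has size $>n$ and one may select at most $n+1$ members whose union already exceeds size $n$, hence joins to $m$. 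Thus $\brd(P)=n+1$.

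The heart of the matter, and the step I expect to be the main obstacle, is the dimension upper bound $\dim(P)\leq n+1$: one must embed $P$ into a product of $n+1$ chains, equivalently produce a realizer consisting of $n+1$ linear extensions. The structural reason such a saving is possible is that the only obstruction forcing $\dim\Pow(m)=m$ is the standard example formed by the atoms $\set{i}$ together with the coatoms $m\setminus\set{i}$, and it is precisely this coatom layer that has been removed in passing to $P$. Concretely I would look for $m-1=n+1$ linear extensions reversing every incomparable pair of sets of size $\leq n$; a natural candidate is the family of lexicographic extensions attached to $m-1$ of the $m$ cyclic rotations of the ground order on $m$ (the full family of $m$ rotations being a realizer of $\Pow(m)$), the point being that the single rotation one discards serves only to separate atoms from the now-absent coatoms. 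Should this explicit verification prove delicate, one may instead invoke Dushnik's exact computation \cite{Dush50}, or the short direct argument of \cite[Lemme~3.3.12]{GillTh}, that $\dim\rB_{n+2}({\les}n)=n+1$. Once $\dim(P)\leq n+1$ is secured, the squeeze of the first paragraph closes and the three invariants all equal $n+1$.
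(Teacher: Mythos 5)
Your argument is essentially the paper's: the authors likewise observe that $\brd\rB_{n+2}({\les}n)=n+1$ is obvious, take the exact value $\dim\rB_{n+2}({\les}n)=n+1$ from Dushnik~\cite{Dush50} (or the short direct argument of \cite[Lemme~3.3.12]{GillTh}), and close the squeeze $n+1\leq\brd\leq\kur\leq\dim\leq n+1$ via Propositions~\ref{P:kurgewbr} and~\ref{P:UppBd}. One caution about your tentative explicit realizer: taking the lexicographic extensions attached to only $m-1$ of the $m$ cyclic rotations does \emph{not} work --- already for $n=1$, $m=3$, dropping the rotation that starts at $2$ leaves $\set{2}$ below $\set{1}$ in both remaining extensions, and in general the discarded rotation is the only one reversing the atom pair $\set{m-1}$, $\set{m-2}$ --- so your fallback to the cited dimension computation is genuinely needed, exactly as in the paper.
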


The following corollary is observed in \cite[Th\'eor\`eme~3.3.13]{GillTh}.

\begin{corollary}[Gillibert]\label{C:Pierren+2}
The relation $(\gl^{+n},n,\gl)\to n+2$ holds for each infinite cardinal~$\gl$ and each positive integer~$n$.
\end{corollary}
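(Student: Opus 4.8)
The plan is to read off the result directly from the Kuratowski-index computation in Lemma~\ref{L:OrdDImBrm}, using the translation between the $\leadsto$ relation and the classical arrow relation supplied by Proposition~\ref{P:FreeB(k,m)}. First I would invoke Lemma~\ref{L:OrdDImBrm} to conclude that $\kur\rB_{n+2}({\les}n)=n+1$ for every positive integer~$n$. Since $\rB_{n+2}({\les}n)$ is not an antichain (being a finite lattice of breadth $n+1\geq 2$), Definition~\ref{D:KurInd} tells us that $n+1$ being \emph{the} Kuratowski index of this poset means precisely that the relation $(\gk^{+n},{<}\gk)\leadsto\rB_{n+2}({\les}n)$ holds for every infinite cardinal~$\gk$.

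Specializing to $\gk:=\gl$, I obtain $(\gl^{+n},{<}\gl)\leadsto\rB_{n+2}({\les}n)$. Now I would apply Proposition~\ref{P:FreeB(k,m)} with $r:=n$ and $m:=n+2$; the hypothesis $1\leq r<m$ is satisfied since $1\leq n<n+2$, and both $\gl^{+n}$ and~$\gl$ are infinite. The equivalence of items~(i) and~(iii) in that proposition then converts $(\gl^{+n},{<}\gl)\leadsto\rB_{n+2}({\les}n)$ into exactly the desired relation $(\gl^{+n},n,\gl)\to n+2$.

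There is no genuine obstacle at this stage: the entire difficulty has already been absorbed into Lemma~\ref{L:OrdDImBrm}, whose content rests on the order-dimension estimate $\dim\rB_{n+2}({\les}n)=n+1$ together with the breadth lower bound furnished by Propositions~\ref{P:UppBd} and~\ref{P:kurgewbr}, and into the equivalence of Proposition~\ref{P:FreeB(k,m)}. Thus the corollary is a straightforward unwinding of definitions once these two inputs are in place, and I would expect the proof to occupy no more than a couple of lines.
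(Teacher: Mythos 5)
Your proposal is correct and follows exactly the route the paper intends: Lemma~\ref{L:OrdDImBrm} gives $\kur\rB_{n+2}({\les}n)=n+1$, Definition~\ref{D:KurInd} unpacks this to $(\gl^{+n},{<}\gl)\leadsto\rB_{n+2}({\les}n)$, and the equivalence (i)$\Leftrightarrow$(iii) of Proposition~\ref{P:FreeB(k,m)} with $r:=n$, $m:=n+2$ yields $(\gl^{+n},n,\gl)\to n+2$. The paper leaves this unwinding implicit (it spells out the identical chain only for Proposition~\ref{P:LargeFreeSets}), so there is nothing to add.
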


In particular, this answers in the affirmative the question, raised on \cite[page~285]{EHMR}, whether $(\aleph_4,4,\aleph_0)\to6$.

These methods show that in order to prove the existence of large free sets for set mappings of type a positive integer~$r$ (as defined in \cite[Section~46]{EHMR}), it is sufficient to establish upper bounds for the order-dimension of finite lattices of the form~$\rB_m({\les}r)$. This problem gets somewhat simplified by using the following easy result.

\begin{lemma}\label{L:dim1tor}
The equality $\dim\rB_m({\les}r)=\dim\rB_m(1,r)$ holds, for all integers~$m$ and~$r$ such that $1\leq r<m$.
\end{lemma}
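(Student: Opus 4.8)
The plan is to prove the two inequalities separately. Since $\rB_m(1,r)$ is exactly the subposet of $\rB_m({\les}r)$ consisting of the singletons and the $r$-element subsets of~$m$, equipped with the induced ordering, and since order-dimension is monotone with respect to induced subposets (restrict each linear extension of the larger poset), the inequality $\dim\rB_m(1,r)\leq\dim\rB_m({\les}r)$ is immediate. All the work lies in the reverse inequality $\dim\rB_m({\les}r)\leq\dim\rB_m(1,r)$.

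For this I would invoke the classical description of order-dimension through \emph{critical pairs}: for a finite poset~$P$, a family of linear extensions is a realizer (its intersection is the ordering of~$P$) if and only if it reverses every critical pair, where an incomparable pair $(x,y)$ is \emph{critical} when every element strictly below~$x$ lies strictly below~$y$ and every element strictly above~$y$ lies strictly above~$x$; hence $\dim P$ is the least size of a family of linear extensions reversing all critical pairs. The heart of the argument is to pin down the critical pairs of $\rB_m({\les}r)$. Feeding the two defining conditions through the containment order shows that the lower member~$x$ must be \jirr, hence a singleton: if $\card x\geq2$, then two distinct maximal proper subsets of~$x$ would both lie strictly below~$y$, and so would their union~$x$, forcing $x\subseteq y$; and $x=\es$ is comparable to everything. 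Dually, the upper member~$y$ must be \mirr, hence an $r$-element set, these being the only elements possessing a unique upper cover (each is covered solely by~$m$). Running the conditions with $x=\set{a}$ and $\card y=r$ then yields that the critical pairs of $\rB_m({\les}r)$ are exactly the pairs $(\set{a},B)$ with $\card B=r$ and $a\notin B$. These are precisely the critical pairs of $\rB_m(1,r)$, and in particular they are incomparable pairs formed from elements common to both posets.

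I would then establish an \emph{extension lemma}: every linear extension~$L$ of $\rB_m(1,r)$ extends to a linear extension of $\rB_m({\les}r)$. Consider on $\rB_m({\les}r)$ the union~$R$ of its containment order~$<$ and the relation $<_L$. Since $<_L$ relates only singletons and $r$-sets, and there agrees with containment, a shortest $R$-cycle may be taken to alternate between $<$-edges and $<_L$-edges (consecutive edges of the same kind merge by transitivity, shortening the cycle). By alternation every vertex of such a cycle is an endpoint of a $<_L$-edge, hence lies in $\rB_m(1,r)$; but then each $<$-edge joins two elements of $\rB_m(1,r)$, where $<$ is contained in $<_L$, and replacing these edges produces an impossible $<_L$-cycle. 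Thus~$R$ is acyclic and extends to a linear order, as required.

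Finally I would assemble the pieces: starting from a realizer of $\rB_m(1,r)$ of size $d=\dim\rB_m(1,r)$, extend each of its members to a linear extension of $\rB_m({\les}r)$ via the extension lemma. The relative order of singletons and $r$-sets is preserved, so the enlarged family still reverses every pair $(\set{a},B)$ with $\card B=r$ and $a\notin B$; as these are exactly the critical pairs of $\rB_m({\les}r)$, the enlarged family is a realizer, whence $\dim\rB_m({\les}r)\leq d$. Combined with the first inequality, this gives the desired equality. I expect the main obstacle to be the critical-pair computation — above all, confirming that no intermediate-cardinality subset, and neither~$\es$ nor~$m$, can occur in a critical pair; once the critical pairs are confined to $\rB_m(1,r)$, the extension lemma and the final assembly are routine.
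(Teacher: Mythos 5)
Your proof is correct, but it follows a genuinely different route from the paper's. The paper proves $\dim\rB_m({\les}r)\leq\dim\rB_m(1,r)$ by an explicit construction: given an order-embedding $\gf$ of $\rB_m(1,r)$ into a product of chains~$K$, it defines $\gy(X):=\bigvee\famm{\gf(\set{i})}{i\in X}$ on all of $\rB_m({\les}r)$ and checks that $\gy$ is an order-embedding into the \emph{same}~$K$, the key step being that every $Y$ with $\card Y\leq r$ is an intersection of $r$-element sets (the same trick used in the proof of Proposition~\ref{P:FreeB(k,m)}), so that $\gy(X)\leq\gy(Y)$ can be tested against the $\gf(Y_j)$. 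You instead compute the critical pairs of $\rB_m({\les}r)$, showing via \jirry\ of the lower member and \mirry\ of the upper member that they are exactly the pairs $(\set{a},B)$ with $\card B=r$ and $a\notin B$, and then extend each member of a realizer of $\rB_m(1,r)$ to a linear extension of the full poset; your verifications (the join/meet-irreducibility reductions, the alternating-cycle argument for the extension lemma, and the final assembly via the critical-pair criterion for realizers) are all sound, with only the cosmetic omission that the upper member of a critical pair is neither $\es$ nor $m$ because those are comparable to everything. The trade-off: the paper's argument is shorter, self-contained, and reuses machinery already present in the paper, whereas yours isolates the structural reason the equality holds — all critical pairs of the truncated cube already live in its two extreme levels — and is an instance of the general principle that dimension is controlled by critical pairs, at the cost of importing that (standard but unproved-here) characterization.
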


\begin{proof}
As $\rB_m({\les}r)$ contains $\rB_m(1,r)$, the inequality $\dim\rB_m({\les}r)\geq\dim\rB_m(1,r)$ is trivial. Conversely, set $N:=\dim\rB_m(1,r)$ and let~$K$ be a product of chains (any lattice would do) with an order-embedding $\gf\colon\rB_m(1,r)\into K$. We set
 \[
 \gy(X):=\bigvee\famm{\gf(\set{i})}{i\in X}\,,
 \quad\text{for each }X\in\rB_m({\les}r)\,.
 \]
Clearly~$\gy$ is isotone. Let $X,Y\in\rB_m({\les}r)$ such that $\gy(X)\les\gy(Y)$, we must prove that $X\subseteq Y$. We may assume that $Y\neq m$. In particular, $\card Y\leq r$, and thus, by part of the proof of Proposition~\ref{P:FreeB(k,m)}, we can write $Y=\bigcap\famm{Y_j}{j<s}$, for a positive integer~$s$ and $Y_0,\dots,Y_{s-1}\in[m]^r$. For each $i\in X$ and each $j<s$,
 \[
 \gf(\set{i})\leq\gy(X)\leq\gy(Y)\leq\gy(Y_j)\leq\gf(Y_j)\,,
 \]
thus, as~$\gf$ is an order-embedding, $i\in Y_j$. As this holds for all possible choices of~$i$ and~$j$, we obtain that $X\subseteq Y$. Therefore, $\gy$ is an order-embedding, and so $\rB_m({\les}r)$ order-embeds into the same product of chains as $\rB_m(1,r)$ does.
\end{proof}

Getting estimates of the order-dimension of $\rB_m(1,r)$ has given rise to a great deal of work, starting with Dushnik~\cite{Dush50}. Further refinements can be found, for example, in Kierstead~\cite{Kier96,Kier99}. We shall illustrate how large free sets can be obtained from small order-dimensions in Proposition~\ref{P:LargeFreeSets}.

We shall use Dushnik's work~\cite{Dush50}. For integers $m$, $r$ with $1\leq r\leq m$, Dushnik denotes by $N(m,r)$ the minimal number~$N$ such that there exists a set~$\cS$ of~$N$ linear orderings of~$m$ such that for each $A\in[m]^r$ and each $a\in A$, there exists $S\in\cS$ such that $(x,a)\in S$ for each $x\in A$. Then Dushnik establishes in \cite[Theorem~III]{Dush50} that
 \begin{equation}\label{Eq:dim=N(mk)}
 \dim\rB_m(1,r)=N(m,r+1)\,,\quad\text{for all integers }m,\,r\text{ such that }
 1<r<m\,.
 \end{equation}
In order to get estimates of~$N(m,r+1)$, we shall use Hajnal's work quoted in Spencer's paper~\cite{Spen71}. For integers~$n$ and~$r$ such that $1\leq r\leq n$, Spencer denotes by $M(n,r)$ the maximal cardinality of an ``$r$-scrambling'' family of subsets of~$n$, and he establishes on \cite[Lemma, p.~351]{Spen71} the inequality
 \begin{equation}\label{Eq:LBforM}
 M(n,r)\geq\intgr{\frac{1}{2}(1-2^{-r})^{-n/r}}\,,
 \end{equation}
where $\intgr{x}$ denotes the largest integer below any real number~$x$. Furthermore, Spencer establishes on \cite[page~351]{Spen71} the inequality
 \begin{equation}\label{Eq:NfromM}
 N\bigl(2^{M(n,r)},r+1\bigr)\leq n\,,\quad\text{for all integers }n,\,r
 \text{ such that }1\leq r<n\,.
 \end{equation}
Now let $n$ and~$r$ be integers such that $1<r<n$, and set
 \begin{equation}\label{Eq:defnEnr}
 m:=2^{\intgr{\frac{1}{2}(1-2^{-r})^{-n/r}}}\,. 
 \end{equation}
It follows from~\eqref{Eq:LBforM} that $2^{M(n,r)}\geq m$, thus, by the isotonicity of the function~$N$, it follows from~\eqref{Eq:NfromM} that $N(m,r+1)\leq N\bigl(2^{M(n,r)},r+1\bigr)\leq n$, and thus, by~\eqref{Eq:dim=N(mk)}, $\dim\rB_m(1,r)\leq n$. Therefore, by Lemma~\ref{L:dim1tor}, $\dim\rB_m({\les}r)\leq n$ as well, and therefore, by Proposition~\ref{P:UppBd}, $\kur\rB_m({\les}r)\leq n$. Now an immediate application of Proposition~\ref{P:FreeB(k,m)} yields the relation $(\gk^{+(n-1)},r,\gk)\rightarrow m$, for each infinite cardinal~$\gk$. This completes the proof of the following result.

\begin{proposition}\label{P:LargeFreeSets}
Let $n$ and~$r$ be positive integers with $2\leq r<n$. Then the relation
$(\gl^{+(n-1)},r,\gl)\rightarrow 2^{\intgr{\frac{1}{2}(1-2^{-r})^{-n/r}}}$ holds for each infinite cardinal~$\gl$.
\end{proposition}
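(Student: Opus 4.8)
The plan is to route the arrow relation through the Kuratowski index of the truncated cube $\rB_m({\les}r)$, where $m:=2^{\intgr{\frac{1}{2}(1-2^{-r})^{-n/r}}}$ is exactly the free-set size asserted in the conclusion. Before invoking any of the truncated-cube machinery, I would separate off the degenerate range $m\leq r$. In that range an $m$-element set has at most one $r$-element subset (itself, and only when $m=r$), so the freeness condition defining $(\gk,r,\gl)\to m$ is either vacuous or automatic; equivalently, since $r<n$ forces $\gl^{+(n-1)}\geq\gl^{+r}$, Kuratowski's Free Set Theorem already supplies a free set of size $r+1\geq m$. Thus the relation holds trivially when $m\leq r$, and the substance is confined to the case $m>r$, which I assume henceforth. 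Because $r\geq2$, this gives the triple inequality $1<r<m$ that the quoted order-dimension estimates require.

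The heart of the argument is the single numerical estimate $N(m,r+1)\leq n$, assembled from the two facts recalled from Spencer. The lower bound~\eqref{Eq:LBforM} gives $M(n,r)\geq\intgr{\frac{1}{2}(1-2^{-r})^{-n/r}}$, hence $2^{M(n,r)}\geq m$; since the function~$N$ is isotone in its first variable, this yields $N(m,r+1)\leq N\bigl(2^{M(n,r)},r+1\bigr)$, and the second inequality~\eqref{Eq:NfromM} caps the right-hand side at~$n$. Dushnik's identity~\eqref{Eq:dim=N(mk)}, applicable precisely because $1<r<m$, then reads $\dim\rB_m(1,r)=N(m,r+1)$, so that $\dim\rB_m(1,r)\leq n$.

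The remainder is bookkeeping with results already established. Lemma~\ref{L:dim1tor} transports the dimension bound from $\rB_m(1,r)$ to the full truncated cube, giving $\dim\rB_m({\les}r)\leq n$; Proposition~\ref{P:UppBd} then yields $\kur\rB_m({\les}r)\leq n$. By Definition~\ref{D:KurInd} this is exactly the statement that $(\gk^{+(n-1)},{<}\gk)\leadsto\rB_m({\les}r)$ holds for every infinite cardinal~$\gk$, and the equivalence of~(i) and~(iii) in Proposition~\ref{P:FreeB(k,m)} converts it into $(\gk^{+(n-1)},r,\gk)\to m$. Specializing $\gk:=\gl$ completes the proof.

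I do not expect a deep obstacle: with Spencer's and Dushnik's estimates taken as black boxes, the proof is one chain of monotone implications, and the only real risk is carelessness about ranges of validity. The points that genuinely need checking are the isotonicity of~$N$ in its first argument, which licenses the middle link of the numerical chain; the constraint $1<r<m$ under which Dushnik's identity~\eqref{Eq:dim=N(mk)} is stated; and the explicit treatment of the degenerate case $m\leq r$, which can occur (for instance $m=1$ when $r=2$ and $n=3$) whenever the floor in the exponent is too small to overtake~$r$.
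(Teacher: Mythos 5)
Your proof is correct and follows essentially the same route as the paper: Spencer's bound \eqref{Eq:LBforM} plus \eqref{Eq:NfromM} and the isotonicity of $N$ give $N(m,r+1)\leq n$, Dushnik's identity \eqref{Eq:dim=N(mk)} converts this to $\dim\rB_m(1,r)\leq n$, and Lemma~\ref{L:dim1tor}, Proposition~\ref{P:UppBd}, and Proposition~\ref{P:FreeB(k,m)} finish the job. Your explicit disposal of the degenerate case $m\leq r$ (which can indeed occur, e.g. $r=2$, $n=3$) is a small point the paper leaves implicit, and it is handled correctly.
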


Denote by $\lg x$ and $\log x$ the base two logarithm, resp. the natural logarithm of a positive real number~$x$. Set $E(n,r):=2^{\intgr{\frac{1}{2}(1-2^{-r})^{-n/r}}}$, for all integers~$n$, $r$ such that $2\leq r<n$. Elementary calculations yield the asymptotic estimate
 \[
 \lg\lg E(n,r)\sim \frac{n}{r2^r\log 2}\,,\quad
 \text{as }n\gg r\gg 0\,.
 \]
We illustrate on Table~\ref{Ta:SmallE} the behavior of the function~$E$ on $r:=4$ and (relatively) small values of~$n$. The given values of~$n$ are minimal for the corresponding values of~$E(n,4)$: for example, $E(171,4)=128$ while $E(172,4)=256$. The value $n:=172$ is the first one for which Proposition~\ref{P:LargeFreeSets} gives a nontrivial large free set result.

\begin{table}
\begin{tabular}{|| l || r | r | r | r | r | r | r | r | r ||}
\hline
$n$ & 172 & 180 & 186 & 192 & 197 & 202 & 207 & 211 & 215\\ \hline
$E(n,4)$ & 256 & 512 & 1,024 & 2,048 & 4,096 & 8,192 & 16,384 & 32,768 & 65,536\\ \hline
\end{tabular}
\caption{\tvi Small values of $E(n,4)$}\label{Ta:SmallE}
\end{table}
In particular, we deduce from Proposition~\ref{P:LargeFreeSets} the following relations:
 \begin{align*}
 (\aleph_{210},4,\aleph_0)&\rightarrow 32{,}768\,;\\
 (\aleph_{214},4,\aleph_0)&\rightarrow 65{,}536\,.
 \end{align*}
and so on. Smaller values of~$n$ also give nontrivial large free sets, however, for those values the bounds that we shall discuss now are better.

We recall the following estimate of the order-dimension of~$\rB_m(1,r)$. This estimate is first stated explicitly in Brightwell \emph{et al.}~\cite[Proposition~1.5]{BKKT94}, where it is attributed to F\"uredi and Kahn~\cite{FuKa86}. However, the estimate is not stated explicitly in the latter paper, although the proof is an easy modification of the proof of that paper's~\cite[Proposition~2.3]{FuKa86}.

\begin{proposition}[F\"uredi and Kahn]\label{P:FuKa}
Let $r$ and~$m$ be integers such that $1\leq r<m$ and let~$d$ be a positive integer satisfying the inequality
 \[
 m\binom{m-1}{r}\Bigl(\frac{r}{r+1}\Bigr)^d<1\,.
 \]
Then $\dim\rB_m(1,r)\leq d$.
\end{proposition}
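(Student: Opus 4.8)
The plan is to prove the bound by the probabilistic (first-moment) method, routed through Dushnik's identity~\eqref{Eq:dim=N(mk)}. I would first dispose of the degenerate case: when $r=1$, the poset $\rB_m(1,1)=[m]^1$ is an antichain with $m\geq2$ elements, so $\dim\rB_m(1,1)=2$, while the hypothesis $m(m-1)2^{-d}<1$ forces $2^d>m(m-1)\geq2$, hence $d\geq2$; thus $\dim\rB_m(1,1)\leq d$ holds trivially. So assume $r\geq2$, in which case $\dim\rB_m(1,r)=N(m,r+1)$ by~\eqref{Eq:dim=N(mk)}, and it suffices to exhibit a family of~$d$ linear orderings of~$m$ witnessing $N(m,r+1)\leq d$, that is, a family such that for every $A\in[m]^{r+1}$ and every $a\in A$ some member of the family puts~$a$ on top of~$A$ \pup{meaning $(x,a)\in S$ for each $x\in A$}. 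The idea is to select $d$ linear orderings $S_1,\dots,S_d$ of~$m$ independently and uniformly at random, and to bound the probability that they \emph{fail} to form such a family.

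For a fixed pair $(A,a)$ with $A\in[m]^{r+1}$ and $a\in A$, and a single uniformly random linear ordering~$S$ of~$m$, the element~$a$ is the $S$-largest element of the $(r+1)$-element set~$A$ with probability exactly $1/(r+1)$, by symmetry. Hence the probability that none of the~$d$ independent orderings puts~$a$ on top of~$A$ equals $\bigl(r/(r+1)\bigr)^d$. The number of pairs $(A,a)$ to be handled is $\binom{m}{r+1}(r+1)=m\binom{m-1}{r}$, so by the union bound the probability that \emph{some} pair $(A,a)$ is never realized is at most $m\binom{m-1}{r}\bigl(r/(r+1)\bigr)^d$. By hypothesis this quantity is strictly less than~$1$, so with positive probability every pair $(A,a)$ is realized; any such outcome yields the desired family, whence $N(m,r+1)\leq d$ and therefore $\dim\rB_m(1,r)\leq d$.

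The only genuinely delicate points are the two numerical facts underpinning the computation: that a uniformly random linear order ranks a prescribed element of an $(r+1)$-element set on top with probability exactly $1/(r+1)$, and the binomial identity $\binom{m}{r+1}(r+1)=m\binom{m-1}{r}$, which is what matches the union-bound count to the coefficient appearing in the statement. I expect no real obstacle beyond verifying these, since passing through Dushnik's identity~\eqref{Eq:dim=N(mk)} lets the argument work with plain linear orderings of~$m$ and sidesteps the need to construct explicit linear extensions of the poset $\rB_m(1,r)$---that bookkeeping is already absorbed into~\eqref{Eq:dim=N(mk)}.
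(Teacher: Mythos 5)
Your argument is correct. Note that the paper itself gives no proof of Proposition~\ref{P:FuKa}: it cites Brightwell \emph{et al.} and F\"uredi--Kahn and remarks that the statement is ``an easy modification'' of an argument in the latter paper, so there is no in-text proof to diverge from. Your route---reduce to $N(m,r+1)$ via Dushnik's identity~\eqref{Eq:dim=N(mk)} for $r\geq2$, then apply the first-moment method to $d$ independent uniformly random linear orderings of~$m$---is the standard probabilistic derivation and is exactly the kind of argument the cited sources use. The two numerical ingredients check out: a uniformly random linear order induces a uniformly random order on any $(r+1)$-element subset~$A$, so a prescribed $a\in A$ is on top with probability $1/(r+1)$, and $\binom{m}{r+1}(r+1)=m\binom{m-1}{r}$, which matches the coefficient in the hypothesis. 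Your separate treatment of $r=1$ is also needed and correctly handled, since~\eqref{Eq:dim=N(mk)} is only stated for $1<r<m$ and $\rB_m(1,1)$ is an antichain of $m\geq2$ elements, of dimension~$2\leq d$. One could quibble that you invoke Dushnik's identity as a black box where a direct construction of $d$ linear extensions of $\rB_m(1,r)$ from the $d$ orderings of~$m$ would make the proof self-contained, but since the paper itself states~\eqref{Eq:dim=N(mk)} and relies on it elsewhere, this is a legitimate shortcut.
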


If $\dim\rB_m(1,r)\leq d$, then, by Lemma~\ref{L:dim1tor}, $\dim\rB_m({\les}r)\leq d$, thus, by Proposition~\ref{P:UppBd}, $\kur\rB_m({\les}r)\leq d$, and thus, by Proposition~\ref{P:FreeB(k,m)}, $(\aleph_{d-1},r,\aleph_0)\rightarrow m$. In particular, we obtain the relation
 \[
 (\aleph_{109},4,\aleph_0)\rightarrow 257\,,
 \]
which is better than the relation $(\aleph_{171},4,\aleph_0)\rightarrow 256$ given by Table~\ref{Ta:SmallE}. However, for $r=4$ and $n\geq211$ the size of the free set given by Proposition~\ref{P:LargeFreeSets} becomes larger again.

We can even obtain further large free sets results by using Dushnik's original exact estimate of the order-dimension of a truncated cube under certain conditions~\cite{Dush50}.

\begin{proposition}[Dushnik]\label{P:Dush50}
Let $m$, $j$, and $k$ be positive integers such that $m\geq4$, $2\leq j\leq\intgr{\sqrt{m}}$, and
 \[
 \intgr{\frac{m+j^2-j}{j}}\leq k<\intgr{\frac{m+(j-1)^2-j+1}{j-1}}\,.
 \]
Then $\dim\rB_m(1,k-1)=m-j+1$.
\end{proposition}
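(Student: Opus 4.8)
The plan is to reduce the computation of $\dim\rB_m(1,k-1)$ to Dushnik's arithmetic function $N(m,k)$ and then to pin down the exact value $N(m,k)=m-j+1$ by separate upper and lower bounds. Applying~\eqref{Eq:dim=N(mk)} with $r:=k-1$ (legitimate, since the hypotheses $m\geq4$ and $2\leq j\leq\intgr{\sqrt m}$ force $1<k-1<m$) yields $\dim\rB_m(1,k-1)=N(m,k)$, so everything reduces to showing $N(m,k)=m-j+1$ under the stated bracketing of~$k$. I would first unfold the definition of $N(m,k)$ into the language of \emph{$k$-suitable} families of linear orderings: a set~$\cS$ of linear orderings of~$m$ satisfies $N(m,k)\leq\card\cS$ precisely when, for every $a<m$ and every $(k-1)$-element subset~$B$ of $m\setminus\set{a}$, some $S\in\cS$ places all of~$B$ below~$a$; equivalently, the down-sets $\setm{x}{(x,a)\in S}$ of~$a$, taken over $S\in\cS$, cover every $(k-1)$-element subset of $m\setminus\set{a}$. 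This dictionary is the workhorse for both inequalities, together with the bookkeeping fact that, within a single ordering~$S$, the down-set sizes of the~$m$ elements are exactly $0,1,\dots,m-1$.

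For the upper bound $N(m,k)\leq m-j+1$, I would construct $m-j+1$ orderings explicitly. The idea is to let $m-j+1$ of the elements each occur as the top element of a distinct ordering---which settles their covering requirement at once, since a full down-set contains every $(k-1)$-subset---and to handle the remaining $j-1$ elements by forcing them into high ranks with suitably spread-out sets of elements above them. The lower hypothesis $\intgr{(m+j^2-j)/j}\leq k$, that is, $\lfloor m/j\rfloor+j-1\leq k$, is exactly what guarantees that $j-1$ such ``low'' elements can be accommodated simultaneously inside $m-j+1$ orderings while still covering all of their $(k-1)$-subsets.

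For the lower bound $N(m,k)\geq m-j+1$, I would argue by contradiction from a suitable family~$\cS$ with $\card\cS\leq m-j$. Since each ordering has a single top element, at most $m-j$ elements ever occur on top, so at least~$j$ elements~$a$ are never on top; for each of these the covering requirement must be met entirely by proper down-sets. A double count of how large and how numerous those down-sets must be, played against the rank-budget constraint that each ordering offers each rank only once, then forces~$k$ below $\intgr{(m+(j-1)^2-j+1)/(j-1)}=\lfloor m/(j-1)\rfloor+j-2$, contradicting the upper hypothesis on~$k$.

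The main obstacle is the precise matching of the two bounds at the floor thresholds: both the construction and the counting must be tuned so that the jump in the value of~$N(m,k)$ occurs exactly at $k=\lfloor m/j\rfloor+j-1$ and $k=\lfloor m/(j-1)\rfloor+j-2$, which is where Dushnik's extremal optimization---and the constraint $2\leq j\leq\intgr{\sqrt m}$ that keeps these intervals meaningful---genuinely does the work. Once this calibration is carried out, the remaining steps are the routine applications of the suitable-ordering dictionary recorded above.
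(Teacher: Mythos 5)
First, a point of reference: the paper does not prove this proposition at all --- it is quoted from Dushnik~\cite{Dush50}, and the only ingredient the paper itself supplies is the identity~\eqref{Eq:dim=N(mk)}. Your opening reduction $\dim\rB_m(1,k-1)=N(m,k)$, together with the check that the hypotheses force $1<k-1<m$, is therefore the right first step, and your translation of $N(m,k)$ into the language of $k$-suitable families is accurate. The trouble begins after that.

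The concrete gap is that you attach the two hypotheses on $k$ to the two bounds on $N(m,k)$ the wrong way around. A $k$-suitable family is automatically $k'$-suitable for every $k'\leq k$ (extend a $k'$-set to a $k$-set containing it and the same ordering works), so $N(m,\cdot)$ is nondecreasing in its second argument. Consequently the \emph{upper} bound $N(m,k)\leq m-j+1$ --- the explicit construction --- can only be enabled by the \emph{upper} hypothesis $k<\intgr{m/(j-1)}+j-2$, while the \emph{lower} bound $N(m,k)\geq m-j+1$ is the half that needs the \emph{lower} hypothesis $k\geq\intgr{m/j}+j-1$. You do the opposite on both counts: you assert that $\intgr{m/j}+j-1\leq k$ is ``exactly what guarantees'' the construction (a lower bound on $k$ cannot help you build a $k$-suitable family, since building one for large $k$ is strictly harder), and your contradiction argument concludes that $k$ lies below $\intgr{m/(j-1)}+j-2$, ``contradicting the upper hypothesis'' --- but $k<\intgr{m/(j-1)}+j-2$ \emph{is} the upper hypothesis, so no contradiction is obtained; the counting must instead force $k<\intgr{m/j}+j-1$. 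Beyond this inversion, the two places where the theorem actually lives --- arranging the $j-1$ never-on-top elements so that every $(k-1)$-set misses one of their up-sets, and the dual double count against the rank budget --- are left as ``suitably spread-out'' and ``a double count,'' which you yourself identify as the main obstacle. As written, the skeleton could not be fleshed out into a proof without first swapping the roles of the two inequalities.
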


In the context of Proposition~\ref{P:Dush50}, it follows from Lemma~\ref{L:dim1tor} that\linebreak $\dim\rB_m({\les}k-1)=m-j+1$, thus, by Proposition~\ref{P:UppBd}, $\kur\rB_m({\les}k-1)\leq m-j+1$, and thus, by Proposition~\ref{P:FreeB(k,m)}, the relation $(\aleph_{m-j},k-1,\aleph_0)\rightarrow m$ holds.

Interesting values are obtained for $k=5$ and $m\in\set{10,11}$, giving respectively the relations
 \[
 (\aleph_7,4,\aleph_0)\rightarrow 10\quad\text{and}\quad
 (\aleph_8,4,\aleph_0)\rightarrow 11\,.
 \]
For $k=6$ and $m\in\set{12,13,14}$, we obtain, respectively,
 \[
 (\aleph_9,5,\aleph_0)\rightarrow 12\,,\quad
 (\aleph_{10},5,\aleph_0)\rightarrow 13\,,\quad
 (\aleph_{11},5,\aleph_0)\rightarrow 14\,.
 \]
Of course, the ``origin'' $\aleph_0$ can be changed, in any of these relations, to any infinite cardinal, so, for example, the relation $(\gl^{+9},5,\gl)\rightarrow 12$ holds for any infinite cardinal~$\gl$.

\section{Discussion}
Our work \cite{Larder} makes a heavy use of Kuratowski indexes of finite lattices, in particular in order to evaluate ``critical points'' between quasivarieties of algebraic structures. Now evaluating the Kuratowski index of a finite poset may be a hard problem, even in the case of easily describable finite lattices. For example, consider the finite lattices~$P$ and~$Q$ represented in Figure~\ref{Fig:lattices}.
\begin{figure}[htp]
\includegraphics{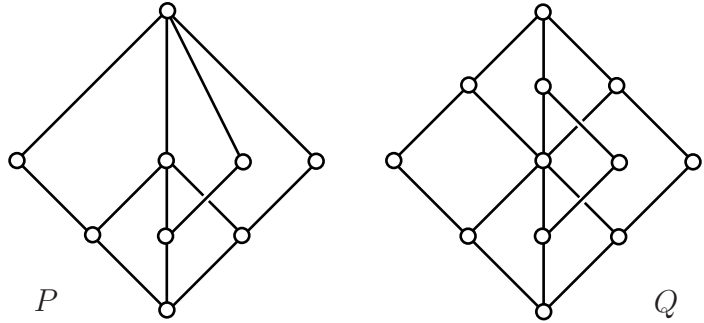}
\caption{Two lattices of breadth two and order-dimension three}\label{Fig:lattices}
\end{figure}
Both~$P$ and~$Q$ have breadth two and order-dimension three. Furthermore, $P$ embeds, as a poset, into~$Q$, thus, by Proposition~\ref{P:KurIndIncr}, $\kur(P)\leq\kur(Q)$. Therefore, by Propositions~\ref{P:UppBd} and~\ref{P:kurgewbr}, we obtain the inequalities
 \[
 2\leq\kur(P)\leq\kur(Q)\leq 3\,.
 \]
We do not know which one of those inequalities can be strengthened to an equality. The statement $\kur(P)=2$ is equivalent to
\begin{quote}
For every infinite cardinal~$\gl$ and every $F\colon[\gl^+]^{<\go}\to[\gl^+]^{<\gl}$, there are distinct $\xi_0,\xi_1,\xi_2,\eta_0,\eta_1,\eta_2<\gl$ such that $\xi_i\notin F(\set{\xi_j,\eta_j})$, $\eta_i\notin F(\set{\xi_j,\eta_j})$, and $\eta_i\notin F(\set{\xi_0,\xi_1,\xi_2})$ for all $i\neq j$ in $\set{0,1,2}$,
\end{quote}
while the statement $\kur(Q)=2$ is equivalent to the apparently stronger statement
\begin{quote}
For every infinite cardinal~$\gl$ and every $F\colon[\gl^+]^{<\go}\to[\gl^+]^{<\gl}$, there are distinct $\xi_0,\xi_1,\xi_2,\eta_0,\eta_1,\eta_2<\gl$ such that $\xi_i\notin F(\set{\xi_j,\eta_j})$ and $\eta_i\notin F(\set{\xi_0,\xi_1,\xi_2,\eta_j})$ for all $i\neq j$ in $\set{0,1,2}$.
\end{quote}

Such results could be relevant in further improving ``large free sets'' results such as Corollary~\ref{C:Pierren+2} and Proposition~\ref{P:LargeFreeSets}. In particular, the gap between~$E(n,4)$ (which gives a positive large free set result in Proposition~\ref{P:LargeFreeSets}) and~$t_n$ (which gives Komj\'ath and Shelah's negative large free set result) looks huge. Can the bound~$E(n,4)$ be improved in Proposition~\ref{P:LargeFreeSets}? On the other hand, we do not even know whether the relation $(\aleph_4,4,\aleph_0)\rightarrow7$ holds.

\end{document}